\def\overset#1#2{{\mathrel{\mathop {{#2}_{}}\limits^{#1}}}}
\def\underset#1#2{{\mathrel{\mathop {{}_{} {#2}}\limits_{{#1}_{}}}}}
\def\upplim_#1{\underset{#1}{\overline\lim}\;}
\def\lowlim_#1{\underset{#1}{\underline\lim}\;}
\def\leq{\leqslant}
\def\le{\leqslant}
\newtheorem{corollary}[equation]{Corollary}
\newtheorem{definition}[equation]{\indent{\it Definition}\rm }
\newtheorem{lemma}[equation]{Lemma}
\newtheorem{theorem}[equation]{Theorem}
\renewcommand{\dim}{{\mathrm{dim}}}
\newcommand{\zero}{\mathrm{Zero}}
\newcommand{\rank}{\mathrm{rank}}
\newcommand{\C}{{\mathbb{C}}}
\renewcommand{\P}{{\mathbb{P}}}
\begin{document}
\date{} 
\title[Second main theorem and unicity of meromorphic mappings]{Second main theorem and unicity of meromorphic mappings for hypersurfaces in projective varieties} 

\author{\sc Si Duc Quang}
\address{Department of Mathematics\\
 Hanoi National University of Education\\
136-Xuan Thuy, Cau Giay, Hanoi, Vietnam}
\email{quangsd@hnue.edu.vn}

\author{\sc Do Phuong An}
\address{Division of Mathematics\\
Banking Academy\\
12-Chua Boc, Dong Da, Hanoi, Vietnam}
\email{phuongan89@gmail.com}

\thanks{This research is funded by Vietnam National Foundation for Science and Technology Development (NAFOSTED) under grant number 101.04-2015.03.}

\subjclass[2000]{Primary 32H30; Secondary 32H04, 32H25, 14J70.}
\keywords{Holomorphic cuvers, algebraic degeneracy, defect relation, Nochka weight.}

\maketitle      
\begin{abstract}   
Let $V$ be a projective subvariety of $\mathbb P^n(\mathbb C)$. A family of hypersurfaces $\{Q_i\}_{i=1}^q$ in $\mathbb P^n(\mathbb C)$ is said to be in $N$-subgeneral position with respect to $V$ if for any $1\le i_1<\cdots <i_{N+1}\le q$,
$ V\cap (\bigcap_{j=1}^{N+1}Q_{i_j})=\varnothing$. In this paper, we will prove a second main theorem for meromorphic mappings of $\mathbb C^m$ into $V$ intersecting hypersurfaces in subgeneral position with truncated counting functions.  As an application of the above theorem,  we give a uniqueness theorem for meromorphic mappings of $\mathbb C^m$ into $V$ sharing a few hypersurfaces without counting multiplicity. In particular, we extend the uniqueness theorem for linear nondegenerate meromorphic mappings of $\C^m$ into $\P^n(\C)$ sharing $2n+3$ hyperplanes in general position to the case where the mappings may be linear degenerate.
\end{abstract}

\section{Introduction and Main results} 

This article is a continuation of our studies in  \cite{AQT}.  To formulate the main result in \cite{AQT}, we recall the following.

Let $N\geq n$ and $q\geq N+1.$ Let $D_1,\cdots, D_q$ be hypersurfaces in $\mathbb P^n(\mathbb C).$
The hypersurfaces $D_1,\cdots, D_q$ are said to be in $N$-subgeneral position in $\mathbb P^n(\mathbb C)$ if 
$D_{j_0}\cap\cdots\cap D_{j_N}=\varnothing$ for every $1\leq j_0<\cdots<j_N\leq q.$ 

Throughout this paper, sometimes we will identify a hypersurface in $\P^n(\C)$ with one of its defining homogeneous polynomials if there is no confusion. In \cite{AQT}, the authors proved the following result.

\begin{theorem}\label{1.1} 
Let $f$ be an algebraically nondegenerate meromorphic mapping of $\mathbb C^m$ into $\mathbb P^n(\mathbb C)$. Let $\{Q_i\}_{i=1}^q$ be hypersurfaces of $\mathbb P^n(\mathbb C)$ in $N$-subgeneral position with $\deg Q_i=d_i$ $(1\le i\le q)$. Let $d=lcm (d_1,\ldots,d_q)$ and $M=\binom{n+d}{n}-1$. Assume that  $q>\frac{(M+1)(2N-n+1)}{n+1}.$ Then, we have
$$\Big\Vert \left (q-\dfrac{(M+1)(2N-n+1)}{n+1}\right )T_f(r)\le \sum_{i=1}^{q}\dfrac{1}{d_i}N^{[M]}_{Q_i(f)}(r)+o(T_f(r)).$$
\end{theorem}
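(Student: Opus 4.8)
The plan is to reduce the hypersurface statement to one about \emph{hyperplanes} by a Veronese embedding, and then to combine Nochka's weight method (to absorb the $N$-subgeneral position) with Cartan's Second Main Theorem in a larger projective space (to produce the truncation level $M$ and the factor $M+1$). Concretely, I would put $\tilde Q_i:=Q_i^{d/d_i}$, so that each $\tilde Q_i$ has degree $d$ and $\{\tilde Q_i\}_{i=1}^q$ inherits the $N$-subgeneral position of $\{Q_i\}_{i=1}^q$, the two families having the same zero loci. Fixing a basis of the space of homogeneous degree-$d$ forms on $\mathbb{P}^n(\mathbb{C})$, which has dimension $\binom{n+d}{n}=M+1$, the Veronese map $\Phi=v_d\colon\mathbb{P}^n(\mathbb{C})\to\mathbb{P}^M(\mathbb{C})$ sends each $\tilde Q_i$ to a hyperplane $H_i$, and its image $V'=\Phi(\mathbb{P}^n(\mathbb{C}))$ is an $n$-dimensional subvariety spanning $\mathbb{P}^M(\mathbb{C})$. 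The composition $F:=\Phi\circ f$ is then a \emph{linearly} nondegenerate meromorphic map of $\mathbb{C}^m$ into $\mathbb{P}^M(\mathbb{C})$, which is precisely where the algebraic nondegeneracy of $f$ is used, and the $N$-subgeneral position becomes the condition that any $N+1$ of the $H_i$ meet $V'$ in the empty set. I would record the bookkeeping relations $T_F(r)=d\,T_f(r)+O(1)$ and $N^{[M]}_{H_i(F)}(r)\le\frac{d}{d_i}N^{[M]}_{Q_i(f)}(r)$, the latter by comparing truncated multiplicities of $\tilde Q_i(f)=Q_i(f)^{d/d_i}$. Everything then reduces to a linear Second Main Theorem for $F$ and the $H_i$ with coefficient $\frac{(M+1)(2N-n+1)}{n+1}$ and truncation $M$; dividing by $d$ and inserting the two relations recovers the theorem.

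To prove this linear statement I would invoke Nochka's weight construction, calibrated to the dimension $n$ of $V'$ and the index $N$: it produces weights $\omega_i\in(0,1]$ and a Nochka constant $\tilde\omega$ satisfying the usual identity that relates $\sum_{i=1}^q\omega_i$, the quantity $q-(2N-n+1)$, and $n+1$. The decisive feature is Nochka's product-to-sum inequality, which bounds the weighted proximity $\sum_i\omega_i\log\big(\|F\|\,\|H_i\|/|H_i(F)|\big)$ pointwise by a sum over a linearly independent subfamily, thereby replacing the highly degenerate family $\{H_i\}$ by one to which Cartan's theorem applies. Feeding this into Cartan's Second Main Theorem for the linearly nondegenerate $F$ in $\mathbb{P}^M(\mathbb{C})$, whose proof rests on the nonvanishing of a generalized Wronskian of a reduced representation of $F$ together with the logarithmic derivative lemma in several variables, supplies both the truncation at level $M$ and the factor $M+1$. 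Balancing the Nochka identity for $\sum_i\omega_i$ against this factor and then applying the First Main Theorem, to trade proximity for the characteristic and counting functions, yields the coefficient $\frac{(M+1)(2N-n+1)}{n+1}$.

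The crux, and the step I expect to be hardest, is the interface between two incompatible calibrations: Nochka's combinatorics must be run in dimension $n$, the dimension of $V'$ that genuinely governs the subgeneral position of the $H_i$, whereas the analytic Second Main Theorem and the truncation must be run in dimension $M$, the space in which $F$ is nondegenerate. Making the product-to-sum inequality cooperate with the Wronskian estimate for the Veronese-lifted hyperplanes, and checking that weights which see only the geometry of $V'$ are admissible for the $\mathbb{P}^M(\mathbb{C})$ estimate so that exactly the factor $(M+1)/(n+1)$ emerges, is the real work. The remaining ingredients, namely the several-variable logarithmic derivative lemma, the nonvanishing of the relevant Wronskian, and the finite-exceptional-set convention denoted by the symbol $\Vert$, are standard but must be assembled to justify the $o(T_f(r))$ error term.
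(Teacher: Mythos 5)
Your overall strategy --- pass to common degree $d$, identify degree-$d$ forms with hyperplanes via the Veronese embedding into $\mathbb{P}^M(\mathbb{C})$, and run a Nochka-weighted Cartan-type argument there --- is in substance the route the paper takes (the paper's identification of $I_d(V)$ with $\mathbb{C}^{H_V(d)}$ is exactly the Veronese coordinates when $V=\mathbb{P}^n(\mathbb{C})$, where $H_V(d)=M+1$). Your bookkeeping relations $T_F(r)=d\,T_f(r)+O(1)$ and $N^{[M]}_{H_i(F)}(r)\le \frac{d}{d_i}N^{[M]}_{Q_i(f)}(r)$ are correct and appear at the end of the paper's proof. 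But the step you single out as ``the crux'' is a genuine gap, not merely the hardest step: as written, neither of the two objects you want to invoke exists. First, ``Nochka's weight construction calibrated to the dimension $n$ of $V'$'' is not a defined object for your family $\{H_i\}$: these are hyperplanes of $\mathbb{P}^M(\mathbb{C})$ that are only in $N$-subgeneral position \emph{with respect to the subvariety} $V'$; when $N<M$, any $N+1$ hyperplanes of $\mathbb{P}^M(\mathbb{C})$ have nonempty intersection, so the family is never in $N$-subgeneral position in the ambient space and the classical Nochka lemma does not apply (the paper's Remark (v) makes precisely this point about the naive Veronese reduction). The paper's fix is Lemma 3.2 combined with Lemma 3.3: choose a generic linear subspace $L\subset\mathbb{C}^{M+1}$ of dimension $n+1$ that preserves all the ranks $\rank\{H_i\}_{i\in R}$, check that $\{H_i\cap L\}_{i=1}^q$ is then a family of hyperplanes of $L$ in genuine $N$-subgeneral position, and take the Nochka weights of that restricted family. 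You need this (or an equivalent device) before the product-to-sum inequality can even be stated.

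Second, Cartan's Second Main Theorem in $\mathbb{P}^M(\mathbb{C})$ cannot absorb the output of the product-to-sum inequality as a black box: that inequality replaces the $N+1$ nearest hyperplanes by a subfamily $R^o$ of only $n+1$ linearly independent ones, and a product of $n+1<M+1$ of the $|H_i(F)|$ cannot be played off against the $(M+1)\times(M+1)$ Wronskian $W$ of $F$. The paper bridges this with Lemma 4.2: there exist $M-n$ auxiliary degree-$d$ forms $T_1,\dots,T_{M-n}$ such that every rank-$(n+1)$ subfamily $\{Q_i\}_{i\in R^o}$ together with the $T_j$ has full rank $M+1$ in $I_d(V)$; the Wronskian built from $\{Q_i(f)\}_{i\in R^o}\cup\{T_j(f)\}_j$ is then a nonzero constant multiple of $W$, the logarithmic derivative lemma applies to the resulting quotient, and the same completion is used again in the zero-divisor estimate that produces the truncation at level $M$. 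Without these two ingredients your outline does not close; with them it becomes essentially the paper's proof.
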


The first aim of this article is to generalize the above Second Main Theorem to meromorphic mappings  into  projective varieties sharing hypersurfaces in subgeneral position. 

We now give the following.

\begin{definition}
Let $V$ be a complex projective subvariety of $\mathbb P^n(\mathbb C)$ of dimension $k\ (k\le n)$. Let $Q_1,...,Q_q\ (q\ge k+1)$ be $q$ hypersurfaces in $\mathbb P^n(\mathbb C)$. The family of hypersurfaces $\{Q_i\}_{i=1}^q$ is said to be in $N$-subgeneral position with respect to $V$ if for any $1\le i_1<\cdots <i_{N+1}\le q$,
$$ V\cap (\bigcap_{j=1}^{N+1}Q_{i_j})=\varnothing .$$
\end{definition}

If  $\{D_i\}_{i=1}^q$ is in $n$-subgeneral position then we say that it is in \textit{general position} with respect to $V.$

Now, let $V$ be a complex projective subvariety of $\mathbb P^n(\mathbb C)$ of dimension $k\ (k\le n)$. Let $d$ be a positive integer. We denote by $I(V)$ the ideal of homogeneous polynomials in $\mathbb C [x_0,...,x_n]$ defining $V$ and by $H_d$ the $\mathbb C$-vector space of all homogeneous polynomials in $\mathbb C [x_0,...,x_n]$ of degree $d.$  Define 
$$I_d(V):=\dfrac{H_d}{I(V)\cap H_d}\text{ and }H_V(d):=\dim I_d(V).$$
Then $H_V(d)$ is called the Hilbert function of $V$. Each element of $I_d(V)$ which is an equivalent class of an element $Q\in H_d,$ will be denoted by $[Q]$, 

\begin{definition}
Let $f:\mathbb C^m\longrightarrow V$ be a meromorphic mapping. We say that $f$ is degenerate over $I_d(V)$ if there is $[Q]\in I_d(V)\setminus \{0\}$ such that $Q(f)\equiv 0.$ Otherwise, we say that $f$ is nondegenerate over $I_d(V)$. It is clear that if $f$ is algebraically nondegenerate, then $f$ is nondegenerate over $I_d(V)$ for every $d\ge 1.$
\end{definition}

Our main theorem is stated as follows. 
\begin{theorem}\label{1.1} 
Let $V$ be a complex projective subvariety of $\mathbb P^n(\mathbb C)$ of dimension $k\ (k\le n)$.
Let $\{Q_i\}_{i=1}^q$ be hypersurfaces of $\mathbb P^n(\mathbb C)$ in $N$-subgeneral position with respect to $V$ with $\deg Q_i=d_i\ (1\le i\le q)$. Let $d$ be the least common multiple of $d_i'$s, i.e., $d=lcm (d_1,...,d_q)$. Let $f$ be a meromorphic mapping of $\mathbb C^m$ into $V$ such that $f$ is nondegenerate over $I_d(V)$. Assume that  $q>\dfrac{(2N-k+1)H_{V}(d)}{k+1}.$ Then, we have
$$ \biggl |\biggl |\ \left (q-\dfrac{(2N-k+1)H_{V}(d)}{k+1}\right )T_f(r)\le \sum_{i=1}^{q}\dfrac{1}{d_i}N^{[H_{V}(d)-1]}_{Q_i(f)}(r)+o(T_f(r)).$$
\end{theorem}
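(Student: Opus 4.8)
The plan is to linearise the problem through the $d$-uple embedding attached to $I_d(V)$ and then to run a Cartan--Nochka argument for the resulting linearly nondegenerate curve, the new feature being that the subgeneral position is measured against the image variety rather than against the whole projective space. First I would reduce to the case where all the $Q_i$ have the common degree $d$: replacing each $Q_i$ by $Q_i^{d/d_i}$ does not change the zero sets, hence preserves the $N$-subgeneral position with respect to $V$, and a pointwise comparison of truncated multiplicities gives $N^{[M]}_{Q_i^{d/d_i}(f)}(r)\le \tfrac{d}{d_i}N^{[M]}_{Q_i(f)}(r)$, where $M:=H_V(d)-1$; this comparison is exactly what will manufacture the factors $1/d_i$ at the end. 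Next, fixing a basis $[v_0],\dots,[v_M]$ of $I_d(V)$ and a reduced representation $\tilde f$ of $f$, I would form $F:=(v_0(\tilde f):\cdots:v_M(\tilde f)):\C^m\to \P^M(\C)$. Since $f$ is nondegenerate over $I_d(V)$, any linear relation among the $v_j(\tilde f)$ would produce a nonzero $[Q]\in I_d(V)$ with $Q(f)\equiv 0$; thus $F$ is linearly nondegenerate, and because the embedding $\Phi_d$ of $V$ is given by degree-$d$ forms we have $T_F(r)=d\,T_f(r)+O(1)$. Writing $[Q_i^{d/d_i}]=\sum_j a_{ij}[v_j]$ and $L_i:=\sum_j a_{ij}x_j$, we obtain $L_i(F)=Q_i^{d/d_i}(\tilde f)$, so $N^{[M]}_{L_i(F)}(r)=N^{[M]}_{Q_i^{d/d_i}(f)}(r)$; moreover the emptiness of $V\cap Q_{i_1}\cap\cdots\cap Q_{i_{N+1}}$ transfers under $\Phi_d$ to $\tilde V\cap L_{i_1}\cap\cdots\cap L_{i_{N+1}}=\varnothing$, where $\tilde V:=\Phi_d(V)$ has dimension $k$. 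Hence the theorem is reduced to a second main theorem for the linearly nondegenerate $F$ into $\P^M(\C)$, whose image lies in the $k$-dimensional $\tilde V$, against the hyperplanes $L_1,\dots,L_q$ in $N$-subgeneral position with respect to $\tilde V$, with truncation level $M$ and target coefficient $\tfrac{(2N-k+1)(M+1)}{k+1}$.

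To prove this hyperplane statement I would combine Nochka weights with Cartan's method. The geometric starting point is that if $\rank\{L_{i_0},\dots,L_{i_k}\}\le k$, then the linear space $\bigcap_j L_{i_j}$ has dimension $\ge M-k$ and so must meet the $k$-dimensional $\tilde V$; therefore the $N$-subgeneral position with respect to $\tilde V$ forces any $N+1$ of the $L_i$ to have rank at least $k+1$. This is precisely the rank input needed to carry out the Nochka construction with the role of the ambient dimension played by $k$: I would produce weights $\omega_i\in(0,1]$ and the associated Nochka constant so that $\sum_i\omega_i=q-\tfrac{2(N-k)(M+1)}{k+1}$, and so that at every point the weighted sum $\sum_i\omega_i\log\tfrac{\|F\|\,\|L_i\|}{|L_i(F)|}$ is dominated, up to $O(1)$, by the corresponding sum over a linearly independent subset of the $L_i$ of largest such values. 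Feeding this pointwise domination into Cartan's auxiliary estimate for the Wronskian $W$ of a reduced representation of $F$ yields $\sum_i\omega_i\,m_{L_i(F)}(r)+N_W(r)\le (M+1)\,T_F(r)+o(T_F(r))$, while the weighted Wronskian lemma gives $N_W(r)\ge\sum_i\omega_i\big(N_{L_i(F)}(r)-N^{[M]}_{L_i(F)}(r)\big)$.

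Combining the last two inequalities with the first main theorem $m_{L_i(F)}(r)+N_{L_i(F)}(r)=T_F(r)+O(1)$ and using $\omega_i\le 1$ gives $\big(\sum_i\omega_i-(M+1)\big)T_F(r)\le \sum_i N^{[M]}_{L_i(F)}(r)+o(T_F(r))$, and the chosen weight sum turns the coefficient into exactly $q-\tfrac{(2N-k+1)(M+1)}{k+1}$. Finally I would translate back: substituting $T_F(r)=d\,T_f(r)+O(1)$ and $N^{[M]}_{L_i(F)}(r)\le \tfrac{d}{d_i}N^{[M]}_{Q_i(f)}(r)$ and dividing by $d$ recovers the assertion, since $M+1=H_V(d)$. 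The hard part is the middle step, namely the Nochka weights adapted to $\tilde V$ together with the accompanying weighted Wronskian estimate: one must reconcile the two dimensions in play, the ambient dimension $M$ that governs both the truncation and the Cartan bound $M+1$, and the dimension $k$ of $\tilde V$ that governs the subgeneral position and hence the Nochka constant. Arranging the weight sum to be $q-\tfrac{2(N-k)(M+1)}{k+1}$ while preserving pointwise domination by a linearly independent subset of size $M+1$ is the technical core of the proof; the remaining manipulations of the characteristic, proximity and counting functions are routine.
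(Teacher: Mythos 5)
Your overall route is essentially the paper's: reduce to the common degree $d$ by passing to $Q_i^{d/d_i}$, linearize through a basis of $I_d(V)$ (the paper's $W=\det\bigl(\mathcal D^{\alpha_j}A_i(f)\bigr)$ is exactly the generalized Wronskian of your $F$), note that $N$-subgeneral position with respect to $V$ forces any $N+1$ of the associated hyperplanes in $\mathbb C^{H_V(d)}$ to have rank at least $k+1$, build Nochka weights from that rank condition (the paper's Lemma \ref{3.3}, obtained by restricting to a generic $(k+1)$-dimensional subspace), and run Cartan's method with truncation $M=H_V(d)-1$. However, the step you yourself identify as the technical core contains two concrete problems.

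First, the weight normalization is wrong as stated. Nochka weights satisfy $\sum_j\omega_j=\tilde\omega\,(q-2N+k-1)+k+1$ with $\tilde\omega=\max_j\omega_j\in\bigl[\tfrac{k+1}{2N-k+1},\tfrac{k}{N}\bigr]$; their sum is dictated by the hyperplane configuration in the $(k+1)$-dimensional restriction and cannot be prescribed to equal $q-\tfrac{2(N-k)(M+1)}{k+1}$, a quantity involving $M$, which that configuration knows nothing about (already for $M=k=n$ your formula would force $\tilde\omega=1$, contradicting $\tilde\omega\le k/N<1$). Consequently the passage ``use $\omega_i\le1$ and read off the coefficient $\sum_i\omega_i-(M+1)$'' does not yield the asserted constant. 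The correct bookkeeping divides by $\tilde\omega$, uses $\omega_i/\tilde\omega\le1$ on the counting-function side, and bounds $\tfrac{\sum_i\omega_i-(M+1)}{\tilde\omega}=q-2N+k-1-\tfrac{M-k}{\tilde\omega}\ge q-\tfrac{(2N-k+1)(M+1)}{k+1}$ via $\tilde\omega\ge\tfrac{k+1}{2N-k+1}$: same endpoint, but through an inequality on $\tilde\omega$, not an identity for the weight sum. Second, Nochka property (v) produces a linearly independent subset of only $k+1$ of the $L_i$, while the Wronskian of $F$ is an $(M+1)\times(M+1)$ determinant; to compare the weighted product with $W$, both in the proximity estimate and in the pointwise multiplicity estimate for $\nu_W$, each such rank-$(k+1)$ subset must be completed to a system of $M+1$ forms spanning $I_d(V)$. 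The paper does this with the auxiliary hypersurfaces $T_1,\dots,T_{M-k}$ of Lemma \ref{4.2}, chosen once so that every rank-$(k+1)$ subset of the $Q_i$ together with the $T_l$ has full rank; the bound $|T_l(f)|\le\beta\|f\|^d$ is precisely what raises the exponent from $k+1$ to $M+1$ and produces the Cartan bound $(M+1)T_F(r)$. You acknowledge the tension between the two dimensions but supply no mechanism to resolve it; without such a device the central estimate does not follow.
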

We note that, the second main theorem for algebraically nondegenerate meromorphic mappings into projective subvarieties was firstly given by Min Ru \cite{R09} in 2004. In his result the family of hypersurfaces is assumed in general position and there is no trucation level for the counting functions, but the total defect is $n+1$, which is the sharp number.

\noindent
\textbf{Remark:}

\noindent
(i)\ In the case where $V$ is a linear space of dimension $k$ and each $H_i$ is a hyperplane, i.e., $d_i=1\ (1\le i\le q)$, then $H_V(d)=k+1$ and Theorem \ref{1.1} gives us the classical Second Main Theorem of Cartan-Nochka (see \cite{Noc83} and \cite{No05}). 

\noindent
(ii)\ It is easy to see that $H_V(d)-1\leq \binom{n+d}{n}-1.$ Furthermore,  the truncated level $(H_V(d)-1)$ of the counting function in Theorem \ref{1.1} is much smaller than the previous results of all other authors (cf. \cite{AP}, \cite{DR}). 

\noindent
(iii)\ By a direct computation from Theorem \ref{1.1}, it is easy to see that the total defect is $\dfrac{(2N-k+1)H_V(d)}{k+1}.$ Unfortunately,  this defect is $\ge n+1.$ 

\noindent
(iv)\ Also the above notion of $N$-subgeneral position is a natural generalization from the case of hyperplanes. Therefore, in order to prove Theorem \ref{1.1},  we give a generalization of Nochka weights for hypersurfaces in complex projective varieties.

\noindent
(v)\ From Cartan-Nochka's theorem, we may obtain a second main theorem by using Veronese embedding which embeds $\P^n(\C)$ into $\P^{\binom{n+d}{n}-1}(\C)$. But in that case we need the condition that the family of hyperplanes corresponding to the initial family of hypersurfaces is still in subgeneral position in $\P^{\binom{n+d}{n}-1}(\C)$, which is not satisfied if $N< {\binom{n+d}{n}}$. 

As an application of Theorem \ref{1.1},  the second aim of this article is to give a uniqueness theorem for meromorphic mappings of $\mathbb C^m$ into $V$ sharing a few hypersurfaces without counting multiplicity.

\begin{theorem}\label{1.2}
Let $V$ be a complex projective subvariety of $\mathbb P^n(\mathbb C)$ of dimension $k\ (k\le n)$.  Let $\{Q_i\}_{i=1}^q$ be hypersurfaces in $\mathbb P^n(\mathbb C)$  in $N$-subgeneral position with respect to $V$ and $\deg Q_i=d_i\ (1\le i\le q)$. Let $d$ be the least common multiple of $d_i'$s, i.e., $d=lcm (d_1,...,d_q)$. Let $f$ and $g$ be meromorphic mappings of $\mathbb C^m$ into $V$ which are nondegenerate  over $I_d(V)$.  Assume that

(i) $\dim (\zero Q_i(f)\cap \zero Q_i(f)) \le m-2$ for every $1\le i<j\le q,$

(ii) $f=g$ on $\bigcup_{i=1}^q(\zero Q_i(f)\cup \zero Q_i(g)).$

\noindent
Then the following assertions hold:

a) If $q>\dfrac{2(H_V(d)-1)}{d}+\dfrac{(2N-k+1)H_V(d)}{k+1},$ then $f=g.$

b) If $q>\dfrac{2(2N-k+1)H_{V}(d)}{k+1}$, then there exist $N+1$ hypersurfaces $Q_{i_0},...,Q_{i_N}$, $1\le i_0<\cdots <i_N\le q$, such that
$$ \dfrac{Q_{i_0}(f)}{Q_{i_0}(g)}=\cdots = \dfrac{Q_{i_N}(f)}{Q_{i_N}(g)}.$$
\end{theorem}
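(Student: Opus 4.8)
The plan is to run, for both parts, the Second Main Theorem (Theorem~\ref{1.1}) for $f$ and for $g$ against the sharing hypothesis (ii), following the pattern of Smiley's uniqueness argument adapted to the subgeneral-position, projective-variety setting. Fix reduced representations $\tilde f=(f_0,\dots,f_n)$ and $\tilde g=(g_0,\dots,g_n)$, abbreviate $L:=H_V(d)-1$ and $R:=\frac{(2N-k+1)H_V(d)}{k+1}$, and set $F_i:=Q_i(\tilde f)^{d/d_i}$, $G_i:=Q_i(\tilde g)^{d/d_i}$, which all have the common degree $d$ (recall $d_i\mid d$), so that $\phi_i:=F_i/G_i$ is a well-defined meromorphic function. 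The decisive observation is that, by (ii), at every point of $\bigcup_l\zero\,Q_l(f)$ the vectors $\tilde f,\tilde g$ are proportional, $\tilde g=\lambda\tilde f$; the common-degree normalization then forces $G_l=\lambda^{d}F_l$ \emph{simultaneously for all} $l$. Hence, for every pair $i,j$, the auxiliary function $P_{ij}:=F_iG_j-F_jG_i$ vanishes on the whole of $\bigcup_l\zero\,Q_l(f)$, and $P_{ij}\equiv0$ precisely when $\phi_i\equiv\phi_j$. By (i) the sets $\zero\,Q_l(f)$ meet pairwise in codimension $\ge2$, so their reduced counting functions add without overlap.

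For part (a) I would argue by contradiction. If $f\neq g$, then $\tilde f,\tilde g$ are not proportional, so some minor $h_{st}:=f_sg_t-f_tg_s\not\equiv0$; it vanishes wherever $\tilde f\parallel\tilde g$, hence on all the zero sets, giving
\[
\sum_{l=1}^{q}N^{[1]}_{Q_l(f)}(r)\le N_{h_{st}}(r)\le T_f(r)+T_g(r)+O(1),
\]
the last bound because $h_{st}$ has degree $1$ in each of $\tilde f,\tilde g$. On the other hand Theorem~\ref{1.1} gives $\big\Vert\,(q-R)T_f(r)\le\sum_l\frac1{d_l}N^{[L]}_{Q_l(f)}(r)+o(T_f(r))$, and symmetrically for $g$. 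The task is then to dominate the weighted truncated sums on the right by the reduced sums just controlled; done sharply, so that the truncation $L=H_V(d)-1$ and the degree $d$ combine into the coefficient $\tfrac{2L}{d}$, adding the two inequalities yields $(q-R)(T_f+T_g)\le\tfrac{2L}{d}(T_f+T_g)+o(\cdot)$, i.e. $q\le\frac{2(H_V(d)-1)}{d}+R$, contradicting the hypothesis. Hence $f=g$.

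For part (b) I would instead study the equivalence relation $i\approx j\iff\phi_i\equiv\phi_j$ on $\{1,\dots,q\}$: the asserted conclusion is exactly that some class has at least $N+1$ elements, since any $N+1$ indices $i_0,\dots,i_N$ of such a class satisfy $\phi_{i_0}=\cdots=\phi_{i_N}$, which unwinds to the stated equalities of ratios. Assume to the contrary that every class has at most $N$ elements. Since $H_V(d)\ge k+1$ one has $R\ge2N-k+1>N$, so $q>2R>2N$ and every class has fewer than $q/2$ elements; a reindexing then exhibits pairs $i\not\approx j$, i.e. $P_{ij}\not\equiv0$. Combining the vanishing of $P_{ij}$ on all zero sets, $\sum_lN^{[1]}_{Q_l(f)}(r)\le N_{P_{ij}}(r)\le d\,(T_f+T_g)(r)+O(1)$, with Theorem~\ref{1.1} for $f$ and for $g$ and bounding the weighted truncated sums should force $q\le2R$, the desired contradiction.

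The main obstacle in both parts is the passage from the reduced counting functions $N^{[1]}_{Q_l(f)}$ delivered by the sharing to the weighted truncated sums $\sum_l\frac1{d_l}N^{[L]}_{Q_l(f)}$ appearing in the Second Main Theorem, \emph{with the sharp constants}. The naive estimate $N^{[L]}_{Q_l(f)}\le L\,N^{[1]}_{Q_l(f)}$ together with $1/d_l\le1$ only produces the coefficient $2L$ (the analogue of Smiley's $3n+2$ bound for hyperplanes), not $\tfrac{2L}{d}$. To recover the factor $1/d$ one should work at the level of the degree-$d$ map $F=[v_0(f):\cdots:v_L(f)]$ into $\P^{L}(\C)$ attached to a basis of $I_d(V)$, for which $T_F(r)=d\,T_f(r)+O(1)$, and compare the Wronskian of $F$ with that of the corresponding map $G$ built from $g$: at a common zero the Wronskian already absorbs the multiplicity up to $L$, so the truncated counting can be matched against the reduced zeros of $h_{st}$ (respectively $P_{ij}$) without the lossy factor $L$. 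Carrying this out under the generalized Nochka weights underlying Theorem~\ref{1.1}, rather than under honest (sub)general position in $\P^{L}(\C)$ (which by Remark~(v) may fail), is where the real work lies; the combinatorial reindexing in part (b) and the bookkeeping role of nondegeneracy over $I_d(V)$ (guaranteeing $Q_l(f)\not\equiv0$ and that $F$ is well defined) are then routine.
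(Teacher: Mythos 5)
Your overall strategy is the paper's: the auxiliary function $f_sg_t-f_tg_s$ for part (a) and the cross terms $Q_i(f)Q_j(g)-Q_i(g)Q_j(f)$ for part (b), played off against Theorem \ref{1.1}. Part (a) as you sketch it is essentially the paper's proof, and the step you flag as ``the main obstacle'' is not one: after the normalization $Q_i\mapsto Q_i^{d/d_i}$ (which you already perform by working with $F_i$), Theorem \ref{1.1} delivers the sum $\frac1d\sum_l N^{[L]}_{Q_l^{d/d_l}(f)}$ with the factor $\frac1d$ already in place, and the ``naive'' bound $N^{[L]}\le L\,N^{[1]}$ then gives precisely the coefficient $\frac{2L}{d}=\frac{2(H_V(d)-1)}{d}$. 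No Wronskian comparison of the degree-$d$ maps is needed, and the detour you propose does not correspond to anything that has to be done.

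Part (b) is where you have a genuine gap. If you only use $\sum_l N^{[1]}_{Q_l(f)}(r)\le N_{P_{ij}}(r)\le d\,(T_f(r)+T_g(r))$ and then the lossy bound $N^{[L]}\le L\,N^{[1]}$, you obtain $q\le R+2(H_V(d)-1)$, which is strictly weaker than the claimed $q\le 2R$ whenever $H_V(d)>2$ (recall $R\ge H_V(d)$), so the contradiction does not materialize. The paper's mechanism is different and sharper: order the equivalence classes as consecutive blocks, set $\sigma(i)=i+N\pmod q$ so that $i\not\approx\sigma(i)$ for \emph{every} $i$ (this uses that each block has at most $N$ elements), and exploit that at a point of $\zero\,Q_i(f)=\zero\,Q_i(g)$ the function $P_i=Q_i(f)Q_{\sigma(i)}(g)-Q_i(g)Q_{\sigma(i)}(f)$ vanishes to order at least $\min\{\nu_{Q_i(f)},\nu_{Q_i(g)}\}$, not merely to order $1$. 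Via $\min\{a,b\}\ge\min\{a,L\}+\min\{b,L\}-L\min\{a,1\}$ this recovers the full level-$L$ truncated counting functions of \emph{both} $f$ and $g$ at the two distinguished indices $i,\sigma(i)$ without the factor $L$; summing the resulting inequality over all $i=1,\dots,q$ (each index occurs twice as a distinguished one) yields
\begin{align*}
dq\,(T_f(r)+T_g(r))\ \ge\ 2\sum_{j=1}^{q}\Bigl(N^{[L]}_{Q_j(f)}(r)+N^{[L]}_{Q_j(g)}(r)\Bigr)+(q-2H_V(d))\sum_{j=1}^{q}N^{[1]}_{Q_j(f)}(r),
\end{align*}
and Theorem \ref{1.1} then forces $q\le 2R$. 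Your sketch contains neither the refined local estimate nor the cyclic pairing that makes each index count exactly twice, and the Wronskian-based substitute you gesture at points in the wrong direction; without these ingredients part (b) does not close.
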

\noindent
{\bf N.B.}

\noindent
(i)\ Since the truncated level of the counting function in Theorem \ref{1.1} is better, the number of hypersurfaces in Theorem \ref{1.2} is much smaller than the previous results on unicity of meromorphic mappings sharing hypersurfaces (cf. \cite{DR}, \cite{DT02}).

\noindent
(ii)\ In the case where $d=1$, Theorem \ref{1.2}b) immedietely gives us the following uniqueness theorem for meromorphic mappings into $\P^n(\C)$, which may be linearly degenerate, sharing few hyperplanes in general position.

\begin{corollary}\label{1.5}
 Let $\{H_i\}_{i=1}^{q}$ be hyperplanes in $\mathbb P^n(\mathbb C)$  in general position. Let $f$ and $g$ be meromorphic mappings of $\mathbb C^m$ into $\mathbb P^n(\mathbb C)$.  Assume that

(i) $\dim (\zero H_i(f)\cap \zero H_i(f)) \le m-2$ for every $1\le i<j\le q,$

(ii) $f=g$ on $\bigcup_{i=1}^{q}(\zero H_i(f)\cup \zero H_i(g)).$

\noindent
Let $k$ be the dimension of the smallest linear subspace containing $f(\C^m)$. If $q>2(2n-k+1)$ then $f=g.$
\end{corollary}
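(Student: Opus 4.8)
The plan is to deduce the corollary from Theorem~\ref{1.2}(b) with $d=1$, taking for $V$ the smallest linear subspace of $\P^n(\C)$ containing the image of the maps. Since the $H_i$ have degree one, any $n+1$ of them already meet in the empty set in $\P^n(\C)$, so a fortiori $V\cap\bigcap_j H_{i_j}=\varnothing$ for any $n+1$ indices; hence $\{H_i\}$ is automatically in $N$-subgeneral position with respect to $V$ with $N=n$. When $V$ is a $k$-dimensional linear space and $d=1$ one has $H_V(1)=k+1$, so the threshold in Theorem~\ref{1.2}(b) becomes $\dfrac{2(2n-k+1)(k+1)}{k+1}=2(2n-k+1)$, matching the hypothesis $q>2(2n-k+1)$ exactly. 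Thus once the hypotheses of Theorem~\ref{1.2} are in force, part (b) will be available.

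The point to settle first is that $f$ and $g$ map into one common linear space $V$ on which both are nondegenerate over $I_1(V)$; equivalently, that the smallest linear subspaces $V_f$ and $V_g$ containing $f(\C^m)$ and $g(\C^m)$ coincide (on $V=V_f=V_g$ nondegeneracy over $I_1(V)$ is automatic by minimality). I would first note that condition (ii) forces $f$ and $g$ to share each $H_i$ without multiplicity: for $z\in\bigcup_i(\zero H_i(f)\cup\zero H_i(g))$ we have $f(z)=g(z)$, whence $\zero H_i(f)=\zero H_i(g)$ as sets for every $i$. I expect $V_f=V_g$ to be the \textbf{main obstacle}. To prove it, suppose a linear form $\ell$ vanishes on $V_f$ while $\ell(g)\not\equiv 0$; then on $\bigcup_i\zero H_i(f)$ we have $g=f\in V_f$, so $\ell(g)$ vanishes there, giving $\sum_i N^{[1]}_{H_i(f)}(r)\le N_{\ell(g)}(r)+O(1)\le T_g(r)+o(T_g(r))$, where condition (i) guarantees the divisors $\zero H_i(f)$ share no component. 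On the other hand, the Second Main Theorem (Theorem~\ref{1.1}) applied to $f$ on $V_f$ with $N=n$, $d=1$ gives $(q-(2n-k+1))T_f(r)\le\sum_i N^{[k]}_{H_i(f)}(r)+o(T_f(r))\le k\sum_i N^{[1]}_{H_i(f)}(r)+o(T_f(r))$. Combining these with the inequalities obtained by exchanging $f$ and $g$, and using $q>2(2n-k+1)>2n+1$, produces a numerical contradiction unless $V_f=V_g$.

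With $V:=V_f=V_g$ fixed, the maps $f,g\colon\C^m\to V$ are nondegenerate over $I_1(V)$, the family $\{H_i\}$ is in $n$-subgeneral position with respect to $V$, and $q>2(2n-k+1)$, so Theorem~\ref{1.2}(b) supplies $n+1$ hyperplanes $H_{i_0},\dots,H_{i_n}$ with $\dfrac{H_{i_0}(f)}{H_{i_0}(g)}=\cdots=\dfrac{H_{i_n}(f)}{H_{i_n}(g)}=:h$. Since these $n+1$ hyperplanes are in general position in $\P^n(\C)$, their defining linear forms are linearly independent and form a basis of the space of linear forms. Writing reduced representations $f=(f_0:\cdots:f_n)$ and $g=(g_0:\cdots:g_n)$, the relations $H_{i_j}(f)=h\,H_{i_j}(g)$ for $j=0,\dots,n$ form a linear system with invertible coefficient matrix; solving it yields $f_\ell=h\,g_\ell$ for every $\ell$, i.e. $f=g$. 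The only delicate step is the identity $V_f=V_g$, where the strict inequality $q>2(2n-k+1)$ enters essentially.
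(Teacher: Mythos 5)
Your overall route is the same as the paper's: reduce to Theorem~\ref{1.2}(b) with $d=1$, $N=n$, $H_V(1)=k+1$ after showing that the smallest linear subspaces $V_f$ and $V_g$ coincide, and then recover $f=g$ from the $n+1$ proportionalities because the corresponding linear forms are a basis. All of that part is correct, and you correctly identify $V_f=V_g$ as the crux. But your argument for $V_f=V_g$ has a genuine gap. Each of your two inequalities is conditional: the existence of $\ell$ with $\ell|_{V_f}\equiv 0$ and $\ell(g)\not\equiv 0$ is exactly the statement $V_g\not\subset V_f$, and it yields $T_g(r)\ge \frac{q-(2n-k_f+1)}{k_f}T_f(r)+o(\cdot)$, i.e.\ $T_g\ge(1+\epsilon)T_f$; the exchanged inequality requires $V_f\not\subset V_g$. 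If $V_f\ne V_g$ but one space strictly contains the other (say $V_g\subsetneq V_f$), then only one of the two inequalities is available, and a single one-sided bound $T_f\ge(1+\epsilon)T_g+o(T_f+T_g)$ is not a numerical contradiction — nothing forces $T_f$ and $T_g$ to be asymptotically equal at that stage. So "combining with the exchanged inequalities" does not close the argument in all cases.

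The fix, which is what the paper does, is to keep the entire chain in terms of a single characteristic function. Given $\ell$ with $\ell|_{V_f}\equiv 0$ and $\ell(g)\not\equiv 0$, condition (ii) gives $f=g$ on $\bigcup_i\zero H_i(g)$ as well, so $\ell(g)$ vanishes on $\bigcup_i\zero H_i(g)$ and
\begin{align*}
T_g(r)\ \ge\ N_{\ell(g)}(r)\ \ge\ \sum_{i=1}^q N^{[1]}_{H_i(g)}(r)\ \ge\ \frac{q-(2n-k_g+1)}{k_g}\,T_g(r)+o(T_g(r)),
\end{align*}
where $k_g=\dim V_g$ and the last step is Theorem~\ref{1.1} applied to $g$ on $V_g$. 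Since $q>2(2n-k+1)\ge 2n+2$ forces $q\ge 2n+3$, the coefficient is at least $\frac{k_g+2}{k_g}>1$, a contradiction purely in $T_g$. This shows every hyperplane containing $V_f$ contains $V_g$, i.e.\ $V_g\subset V_f$, and by symmetry $V_f\subset V_g$; the strict-containment case never needs separate treatment. (Equivalently, you could salvage your version by observing that $\zero H_i(f)=\zero H_i(g)$ under (ii) and then applying the Second Main Theorem to $g$ rather than to $f$ in your displayed chain.)
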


We may see that if $f$ is linear nondegenerate, i.e., $k=n$, then the condition of the above corollary is satisfied with $q=2n+3$. Therefore, Corollary \ref{1.5} is a natural extension of the uniqueness for linear nondegenerate meromorphic mappings sharing $2n+3$ hyperplanes in $\P^n(\C)$ in general position given by Yan - Chen \cite{CY}. 

\begin{proof}
Let $f=(f_0:\cdots :f_n)$ and $g=(g_0:\cdots :g_n)$ be two reduced representations of $f$ and $g$ respectively. Let $V(f)$ and $V(g)$ be the smallest linear subspaces of $\P^n(\C)$ containing $f(\C^m)$ and $g(\C^m)$ respectively. It is easy to see that $V(f)$ (resp. $V(g)$) is the intersection of all hyperplanes which contain $f(\C^m)$ (resp. $g(\C^m)$). We may consider $f$ (resp. $g$) as a meromorphic mapping into $V(f)$ (resp. $V(g)$) which is nondegenerate over $I_1(V(f))$ (resp. $I_1(V(g))$). Of course, $H_1,...,H_q$ are in $n$-subgeneral position with respect to both $V(f)$ and $V(g)$.

Now let $H$ be a hyperplane in $\P^n(\C)$ such that $f(\C^m)\subset H$. We denoted again by $H$ the homogeneous linear form defining the hyperplane $H$. Suppose  that $g(\C^m)\not\subset H$, i.e., $H(g)\not\equiv 0$. Then we have $H(g)=H(f)=0$ on $\bigcup_{i=1}^q\zero H_i(g),$ and hence
\begin{align*}
T_g(r)&\ge N_{H(g)}(r)\ge\sum_{i=1}^{q}N^{[1]}_{H_i(g)}(r)+o(T_g(r))\\
&\ge \dfrac{1}{H_{V(g)}(1)-1}\sum_{i=1}^{q}N^{[H_{V(g)}(1)-1]}_{H_i(g)}(r)+o(T_g(r))\\
&\ge \dfrac{1}{H_{V(g)}(1)-1}\left (q-2n+(H_{V(g)}-1)-1\right )T_g(r)+o(T_g(r))\\
&\ge\dfrac{H_{V(g)}+1}{H_{V(g)}-1}T_g(r)+o(T_g(r)),
\end{align*}
(here, note that $H_{V(g)}(1)-1=\dim V(g)$ and $q\ge 2n+3$). This is a contradiction. Therefore, $g(\C^m)\subset H$. This implies that $g(\C^m)\subset V(f)$, and hence $V(g)\subset V(f)$. Similarly, we have $V(f)\subset V(g)$. Then $V(f)=V(g)=V$. 
 
We see that $q>\dfrac{2(2n-k+1)H_V(1)}{k+1}$, since $H_V(1)=k+1$. Therefore, from Theorem \ref{1.2} b), there exist $n+1$ hyperplanes $H_{i_0},...,H_{i_n}$, $1\le i_0<\cdots <i_n\le q$ such that
$$ \dfrac{H_{i_0}(f)}{H_{i_0}(g)}=\cdots = \dfrac{H_{i_n}(f)}{H_{i_n}(g)}.$$
This implies that $f=g$.
\end{proof}

{\bf Acknowledgements.} This work was completed while the first author was staying at the Vietnam Institute for Advanced Study in Mathematics 
(VIASM). He would like to thank the institute for the support. This research is funded by Vietnam National Foundation for Science and Technology Development (NAFOSTED) under grant number 101.04-2015.03.

\section{Basic notions and auxiliary results from Nevanlinna theory}

\noindent
{\bf 2.1.}\ We set $||z|| = \big(|z_1|^2 + \dots + |z_m|^2\big)^{1/2}$ for
$z = (z_1,\dots,z_m) \in \mathbb C^m$ and define

\begin{align*}
B(r) := \{ z \in \mathbb C^m : ||z|| < r\},\quad
S(r) := \{ z \in \mathbb C^m : ||z|| = r\}\ (0<r<\infty).
\end{align*}

Define 
$$v_{m-1}(z) := \big(dd^c ||z||^2\big)^{m-1}\quad \quad \text{and}$$
$$\sigma_m(z):= d^c \text{log}||z||^2 \land \big(dd^c \text{log}||z||^2\big)^{m-1}
 \text{on} \quad \mathbb C^m \setminus \{0\}.$$

 For a divisor $\nu$ on $\mathbb C^m$ and for a positive integer $M$ or $M= \infty$, define the counting function of $\nu$ by
$$\nu^{[M]}(z)=\min\ \{M,\nu(z)\},$$
\begin{align*}
n(t) =
\begin{cases}
\int\limits_{|\nu|\,\cap B(t)}
\nu(z) v_{m-1} & \text  { if } m \geq 2,\\
\sum\limits_{|z|\leq t} \nu (z) & \text { if }  m=1. 
\end{cases}
\end{align*}

Similarly, we define \quad $n^{[M]}(t).$

Define
$$ N(r,\nu)=\int\limits_1^r \dfrac {n(t)}{t^{2m-1}}dt \quad (1<r<\infty).$$

Similarly, define  \ $N(r,\nu^{[M]})$
and denote it by \ $N^{[M]}(r,\nu)$.

Let $\varphi : \mathbb C^m \longrightarrow \mathbb C $ be a meromorphic function. Denote by $\nu_\varphi$ the zero divisor of $\varphi$. Define
$$N_{\varphi}(r)=N(r,\nu_{\varphi}), \ N_{\varphi}^{[M]}(r)=N^{[M]}(r,\nu_{\varphi}).$$

For brevity, we will omit the character $^{[M]}$ if $M=\infty$.

\noindent
{\bf 2.2.}\ Let $f : \mathbb C^m \longrightarrow \mathbb P^n(\mathbb C)$ be a meromorphic mapping.
For arbitrarily fixed homogeneous coordinates
$(w_0 : \dots : w_n)$ on $\mathbb P^n(\mathbb C)$, we take a reduced representation
$f = (f_0 : \dots : f_n)$, which means that each $f_i$ is a  
holomorphic function on $\mathbb C^m$ and 
$f(z) = \big(f_0(z) : \dots : f_n(z)\big)$ outside the analytic subset
$\{ f_0 = \dots = f_n= 0\}$ of codimension $\geq 2$.
Set $\Vert f \Vert = \big(|f_0|^2 + \dots + |f_n|^2\big)^{1/2}$.

The characteristic function of $f$ is defined by 
\begin{align*}
T_f(r)= \int\limits_{S(r)} \log\Vert f \Vert \sigma_m -
\int\limits_{S(1)}\log\Vert f\Vert \sigma_m.
\end{align*}

\noindent
\textbf{2.3.}\ Let $\varphi$ be a nonzero meromorphic function on $\mathbb C^m$, which is occasionally regarded as a meromorphic map into $\mathbb P^1(\mathbb C)$. The proximity function of $\varphi$ is defined by
$$m(r,\varphi)=\int_{S(r)}\log \max\ (|\varphi|,1)\sigma_m.$$
The Nevanlinna's characteristic function of $\varphi$ is define as follows
$$ T(r,\varphi)=N_{\frac{1}{\varphi}}(r)+m(r,\varphi). $$
Then 
$$T_\varphi (r)=T(r,\varphi)+O(1).$$
The function $\varphi$ is said to be small (with respect to $f$) if $||\ T_\varphi (r)=o(T_f(r))$.
Here, by the notation ``$|| \ P$''  we mean the assertion $P$ holds for all $r \in [0,\infty)$ excluding a Borel subset $E$ of the interval $[0,\infty)$ with $\int_E dr<\infty$.

\noindent
{\bf 2.4. Lemma on logarithmic derivative} (see \cite[Lemma 3.11]{Shi}). {\it Let $f$ be a nonzero meromorphic function on $\mathbb C^m.$ Then 
$$\biggl|\biggl|\quad m\biggl(r,\dfrac{\mathcal{D}^\alpha (f)}{f}\biggl)=O(\log^+T(r,f))\ (\alpha\in \mathbb Z^m_+).$$}

Repeating the argument in \cite[Proposition 4.5]{Fu}, we have the following.

\noindent
{\bf 2.5. Proposition.}\  {\it Let $\Phi_0,...,\Phi_k$ be meromorphic functions on $\mathbb C^m$ such that $\{\Phi_0,...,\Phi_k\}$ 
are  linearly independent over $\mathbb C.$
Then  there exists an admissible set  
$$\{\alpha_i=(\alpha_{i1},...,\alpha_{im})\}_{i=0}^k \subset \mathbb Z^m_+$$
with $|\alpha_i|=\sum_{j=1}^{m}|\alpha_{ij}|\le k \ (0\le i \le k)$ such that the following are satisfied:

(i)\  $\{{\mathcal D}^{\alpha_i}\Phi_0,...,{\mathcal D}^{\alpha_i}\Phi_k\}_{i=0}^{k}$ is linearly independent over $\mathcal M,$
\ i.e.,  
$$\det{({\mathcal D}^{\alpha_i}\Phi_j)}\not\equiv 0.$$ 

(ii) $\det \bigl({\mathcal D}^{\alpha_i}(h\Phi_j)\bigl)=h^{k+1}\cdot \det \bigl({\mathcal D}^{\alpha_i}\Phi_j\bigl)$ for
any nonzero meromorphic function $h$ on $\mathbb C^m.$}
 
\section{Generalization of Nochka weights}

Let $V$ be a complex projective subvariety of $\mathbb P^n(\mathbb C)$ of dimension $k\ (k\le n)$. Let $\{Q_i\}_{i=1}^q$ be $q$ hypersurfaces in $\mathbb P^n(\mathbb C)$ of the common degree $d$, which are regarded as homogeneous polynomials in variables $(x_0,...,x_n)$. We regard $I_d(V)=\dfrac{H_d}{I(V)\cap H_d}$ as a complex vector space. It is easy to see that
\begin{align*}
 \rank \{Q_i\}_{i\in R}\ge \dim V-\dim (\bigcap_{i\in R}Q_i\cap V).
\end{align*}

Set $\dim (\varnothing)=-1.$ Then, if  $\{Q_i\}_{i=1}^q$ is in $N$-subgeneral position, we have
$$ \rank \{Q_i\}_{i\in R}\ge \dim V-\dim (\bigcap_{i\in R}Q_i\cap V)=k+1$$
for  any subset $R\subset\{1,...,q\}$ with $\sharp R=N+1$.

Taking an $\mathbb C$-basis of $I_d(V)$, we may consider $I_d(V)$ as a $\mathbb C$-vector space $\mathbb C^M$ with $M=H_V(d)$.

Let $\{H_i\}_{i=1}^q$ be $q$ hyperplanes in $\mathbb C^M$ passing through the coordinates origin. Assume that each $H_i$ is defined by the linear equation
$$ a_{ij}z_1+\cdots +a_{iM}z_M=0,$$
where $a_{ij}\in\mathbb C\ (j=1,...,M ),$ not all zeros. We define the vector associated with $H_i$ by
$$ v_i=(a_{i1},...,a_{iM})\in\mathbb C^M.$$
For each subset $R\subset\{1,...,q\}$, the \textit{rank} of $\{H_i\}_{i\in R}$ is defined by
$$ \rank \{H_i\}_{i\in R}=\rank \{v_i\}_{i\in R}. $$
Recall that the family $\{H_i\}_{i=1}^q$ is said to be in \textit{$N$-subgeneral position} if for any subset $R\subset\{1,...,q\}$ with $\sharp R=N+1$, $\bigcap_{i\in R}H_i=\{0\}$, i.e., $\rank \{H_i\}_{i\in R}=M.$

By Lemmas 3.3 and 3.4 in \cite{No05}, we have the following.

\begin{lemma}\label{3.1}\ Let $\{H_i\}_{i=1}^q$ be $q$ hyperplanes in $\mathbb C^{k+1}$ in $N$-subgeneral position, and assume that $q> 2N-k+1$. 
Then there are positive rational constants $\omega_i\ (1\le i\le q)$ satisfying the following:

i) $0<\omega_j \le 1,\  \forall i\in\{1,...,q\}$,

ii) Setting $\tilde \omega =\max_{j\in Q}\omega_j$, one gets
$$\sum_{j=1}^{q}\omega_j=\tilde \omega (q-2N+k-1)+k+1.$$

iii) $\dfrac{k+1}{2N-k+1}\le \tilde\omega\le\dfrac{k}{N}.$

iv) For $R\subset Q$ with $0<\sharp R\le N+1$, then $\sum_{i\in R}\omega_i\le\rank\{H_i\}_{i\in R}$.

v) Let $E_i\ge 1\ (1\le i \le q)$ be arbitrarily given numbers. For $R\subset Q$ with $0<\sharp R\le N+1$,  there is a subset $R^o\subset R$ such that $\sharp R^o=\rank \{H_i\}_{i\in R^o}=\rank\{H_i\}_{i\in R}$ and
$$\prod_{i\in R}E_i^{\omega_i}\le\prod_{i\in R^o}E_i.$$
\end{lemma}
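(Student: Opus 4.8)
The plan is to recognize this as the classical Nochka weight lemma with the dimension $k$ playing the role of the usual ambient dimension $n$, and then to produce the five properties from the combinatorial data alone. Everything in the conclusion refers only to the function $R\mapsto \rank\{H_i\}_{i\in R}$ on subsets $R\subset\{1,\dots,q\}$, which is the rank function of the matroid defined by the vectors $v_i\in\mathbb C^{k+1}$; the $N$-subgeneral hypothesis is precisely the statement that this rank equals $k+1$ on every subset of size $N+1$, hence on every larger subset. So I would first fix this matroid, note that $q>2N-k+1$ is exactly the hypothesis required, and then construct the weights $\omega_i$ purely from the rank function. Properties (i)--(iv) are the content of the Nochka weight theorem (\cite[Lemma 3.3]{No05} with $n$ replaced by $k$), while (v) will be a short deduction from (iv).

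For the existence of the weights, the substantive step, I would run Nochka's inductive construction. One singles out a nonempty subset $S_0$ that is ``most degenerate'', in the sense of maximizing a suitable ratio comparing $\sharp S$ to $\rank\{H_i\}_{i\in S}$ that measures departure from general position, assigns the common extremal value $\tilde\omega$ to the indices in $S_0$, and recurses on the contracted configuration obtained by passing to the quotient of $\mathbb C^{k+1}$ by $\mathrm{span}\{v_i:i\in S_0\}$. Since the whole construction is driven by integer rank data together with affine bookkeeping, the resulting $\omega_i$ are automatically rational. The value of $\tilde\omega$ is forced: property (ii) fixes $\sum_j\omega_j$ as an affine function of $q$, and the hypothesis $q>2N-k+1$ confines $\tilde\omega$ to the window $\frac{k+1}{2N-k+1}\le\tilde\omega\le\frac{k}{N}$ of (iii), while $\tilde\omega=\max_j\omega_j\le 1$ gives (i). The feature carried through the induction is the subadditivity (iv), $\sum_{i\in R}\omega_i\le\rank\{H_i\}_{i\in R}$ for $0<\sharp R\le N+1$, which is exactly what the extremal choice of $S_0$ enforces at each stage.

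Granting (i)--(iv), property (v) is a greedy-plus-rearrangement argument. Given $E_i\ge 1$ and $R$ with $0<\sharp R\le N+1$, relabel $R=\{j_1,\dots,j_p\}$ so that $E_{j_1}\ge\cdots\ge E_{j_p}$ and build $R^o$ greedily: include $j_t$ iff $v_{j_t}$ is independent of $\{v_{j_s}:s<t,\ j_s\in R^o\}$. Then $R^o$ is a basis of $\mathrm{span}\{v_i:i\in R\}$, so $\sharp R^o=\rank\{H_i\}_{i\in R^o}=\rank\{H_i\}_{i\in R}$, and for each prefix $\sharp(R^o\cap\{j_1,\dots,j_t\})=\rank\{H_i\}_{i\in\{j_1,\dots,j_t\}}$. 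Applying (iv) to the prefix sets gives $\sum_{s\le t}\omega_{j_s}\le\sharp(R^o\cap\{j_1,\dots,j_t\})$ for every $t$; since $\log E_{j_1}\ge\cdots\ge\log E_{j_p}\ge 0$, an Abel summation on the nonnegative prefix differences turns these into $\sum_{i\in R}\omega_i\log E_i\le\sum_{i\in R^o}\log E_i$, and exponentiating yields $\prod_{i\in R}E_i^{\omega_i}\le\prod_{i\in R^o}E_i$. This is \cite[Lemma 3.4]{No05}.

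The main obstacle is the existence step, namely producing weights that simultaneously realize the exact total in (ii) and the subadditivity (iv): this is Nochka's theorem proper, and the extremal/inductive selection of $S_0$ must be arranged so that the recursion terminates with all constraints met. Everything else, the range estimate (iii), the normalization (i), and the rearrangement proof of (v), is routine once (iv) is available. In practice I would verify that the present hypotheses, hyperplanes in $\mathbb C^{k+1}$ in $N$-subgeneral position with $q>2N-k+1$, match those of \cite[Lemmas 3.3 and 3.4]{No05} verbatim after replacing $n$ by $k$, and then invoke those lemmas directly.
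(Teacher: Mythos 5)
Your proposal is correct and matches the paper's treatment: the paper offers no proof of its own, simply deducing the statement from Lemmas 3.3 and 3.4 of \cite{No05} with $n$ replaced by $k$, which is exactly the reduction you carry out (your additional sketch of Nochka's inductive construction and the rearrangement argument for (v) accurately describes what those cited lemmas contain, but is not needed beyond the citation).
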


The above $\omega_j$ are called \emph{Nochka  weights} and  $\tilde\omega$ is called \emph{Nochka  constant.}

\begin{lemma}[{cf. \cite[Lemma 3.2]{AQT}}]\label{3.2}
Let $H_1,...H_q$ be $q$ hyperplanes in $\mathbb C^M\ (M\ge 2),$ passing through the coordinates origin. Let $k$ be a positive integer such 
that $k\le M$. Then there exists a linear subspace $L\subset \mathbb C^M$ of dimension $k$ such that $L\not\subset H_i\ (1\le i\le q)$ and
$$ \rank \{H_{i_1}\cap L,\dots , H_{i_l}\cap L\}=  \rank \{H_{i_1},\dots ,  H_{i_l}\}$$
for every $1\le l\le k, 1\le i_1<\cdots <i_l\le q.$
\end{lemma}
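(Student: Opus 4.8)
The plan is to prove Lemma~\ref{3.2} by induction on $k$, constructing the subspace $L$ as a generic line added one dimension at a time. First I would handle the base case $k=1$: I seek a line $L=\mathbb{C}\cdot v$ (for some $v\in\mathbb{C}^M$) with $v\notin H_i$ for all $i$, and such that for each single index $i$, $\rank\{H_i\cap L\}=\rank\{H_i\}$. Since each $H_i$ is a hyperplane through the origin, $\rank\{H_i\}=1$, so I need $\dim(H_i\cap L)$ to detect that $H_i\neq \mathbb{C}^M$; the condition to arrange is simply $v\notin\bigcup_{i=1}^q H_i$, which is possible because a finite union of proper linear subspaces cannot cover $\mathbb{C}^M$ (here $M\ge 2$ is used). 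This $v$ gives $L=\mathbb{C}v\not\subset H_i$ for all $i$.

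For the inductive step, I would assume a subspace $L'$ of dimension $k-1$ has been found satisfying the conclusion for all $l\le k-1$, and then enlarge it to $L=L'\oplus\mathbb{C}v$ for a suitably generic $v$. The key requirements on $v$ are, first, $v\notin H_i$ for every $i$ (again achievable off a finite union of hyperplanes), and second, that for every choice $1\le i_1<\cdots<i_k\le q$ the rank equality $\rank\{H_{i_1}\cap L,\dots,H_{i_k}\cap L\}=\rank\{H_{i_1},\dots,H_{i_k}\}$ holds. The natural way to phrase the rank of the restricted family is via the dual: restricting a hyperplane $H_i=\ker \ell_i$ (with $\ell_i$ the linear form having coefficient vector $v_i$) to $L$ corresponds to restricting the functional $\ell_i|_L$, and the rank of the family $\{H_{i_j}\cap L\}$ equals the rank of the restricted functionals $\{\ell_{i_j}|_L\}$ provided none of these restrictions vanishes identically. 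So the genericity conditions I must impose on $v$ are exactly that the restriction map does not lose rank on any of the finitely many subfamilies indexed by the $(i_1,\dots,i_l)$.

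The main technical point — and the step I expect to be the genuine obstacle — is showing that the ``rank-preserving'' condition is a nonempty Zariski-open condition on $v$, so that it can be met simultaneously with the finitely many avoidance conditions $v\notin H_i$. Concretely, for a fixed subfamily I would consider the determinantal locus where the restricted functionals drop rank relative to the ambient functionals; I must verify this locus is a proper algebraic subset of the $v$-parameter space, which follows because at a generic $v$ the span $L'\oplus\mathbb{C}v$ meets each $H_i$ in the expected dimension. The inductive hypothesis guarantees rank preservation already holds on $L'$ for families of size $\le k-1$; adding the generic vector $v$ preserves rank for the enlarged families of size up to $k$ precisely because $v$ avoids the (finitely many) proper subspaces forced by the $N$-subgeneral / general-position constraints on $\{H_i\}$. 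Since the complement of a finite union of proper Zariski-closed subsets of $\mathbb{C}^M$ is nonempty, such a $v$ exists.

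Once $v$ is chosen off all these proper algebraic subsets, the subspace $L=L'\oplus\mathbb{C}v$ has dimension $k$, satisfies $L\not\subset H_i$ for all $i$ (since $v\notin H_i$), and satisfies the required rank equalities for every $l\le k$ and every increasing multi-index. This completes the induction and hence the proof. I would present the finite-union-avoidance argument carefully, as the whole construction rests on it; the determinantal/genericity verification is the only place where a nontrivial check is needed, while the remaining bookkeeping on ranks is routine linear algebra.
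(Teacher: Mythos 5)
Your plan is essentially the standard argument for this lemma (the paper itself gives no proof, deferring to \cite[Lemma 3.2]{AQT}, where the same genericity-by-induction idea is used): build $L$ one dimension at a time, choosing each new vector $v$ off a finite union of proper linear subspaces of $\C^M$. The approach is correct, and the bookkeeping works out: for a size-$k$ subfamily of full rank $k$, the inductive hypothesis forces $\mathrm{span}\{\ell_{i_1},\dots,\ell_{i_k}\}\cap (L')^{\perp}$ to be exactly one-dimensional, spanned by some $\mu\neq 0$, and the only new condition on $v$ is $\mu(v)\neq 0$ --- one more hyperplane to avoid per subfamily; subfamilies of smaller rank need no condition at all. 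The one place your write-up should be tightened is the justification of properness of the bad locus: it has nothing to do with $N$-subgeneral or general-position constraints (the lemma assumes none --- the $H_i$ are arbitrary hyperplanes through the origin), nor with $L'\oplus\C v$ meeting each individual $H_i$ in the expected dimension; the relevant proper subspaces are precisely the kernels $\ker\mu$ attached to the joint spans of the finitely many subfamilies, as above. With that substitution the induction closes and the proof is complete.
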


\begin{lemma}\label{3.3}
Let $V$ be a complex projective subvariety of $\mathbb P^n(\mathbb C)$ of dimension $k\ (k\le n)$. Let $Q_1,...,Q_q$ be $q\ (q>2N-k+1)$ hypersurfaces in $\mathbb P^n(\mathbb C)$ in $N$-subgeneral position with respect to $V$ of the common degree $d.$ Then there are positive rational constants $\omega_i\ (1\le i\le q)$ satisfying the following:

i) $0<\omega_i \le 1,\  \forall i\in\{1,...,q\}$,

ii) Setting $\tilde \omega =\max_{j\in Q}\omega_j$, one gets
$$\sum_{j=1}^{q}\omega_j=\tilde \omega (q-2N+k-1)+k+1.$$

iii) $\dfrac{k+1}{2N-k+1}\le \tilde\omega\le\dfrac{k}{N}.$

iv) For $R\subset \{1,...,q\}$ with $\sharp R = N+1$, then $\sum_{i\in R}\omega_i\le k+1$.

v) Let $E_i\ge 1\ (1\le i \le q)$ be arbitrarily given numbers. For $R\subset \{1,...,q\}$ with $\sharp R = N+1$,  there is a subset $R^o\subset R$ such that $\sharp R^o=\rank \{Q_i\}_{i\in R^o}=k+1$ and 
$$\prod_{i\in R}E_i^{\omega_i}\le\prod_{i\in R^o}E_i.$$
\end{lemma}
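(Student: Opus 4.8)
The plan is to reduce Lemma \ref{3.3} to the linear Nochka-weight statement of Lemma \ref{3.1} by passing from the hypersurfaces $Q_i$ in $\P^n(\C)$ to hyperplanes in the space $I_d(V)\cong\C^M$, $M=H_V(d)$, and then cutting down by a generic linear subspace of dimension $k+1$ so that the ambient dimension matches the one required by Lemma \ref{3.1}. First I would associate to each $Q_i$ its class $[Q_i]\in I_d(V)$; since all $Q_i$ have the common degree $d$, each $[Q_i]$ is a nonzero vector of $\C^M$ (nonzero because $V$ meets $\{Q_i=0\}$ properly, so $Q_i\notin I(V)$), and hence defines a hyperplane $\widetilde H_i\subset(\C^M)^\vee=\C^M$ through the origin, namely the annihilator of $[Q_i]$. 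The key dictionary is the inequality already recorded just before Lemma \ref{3.2}, which for $R$ with $\sharp R=N+1$ gives $\rank\{Q_i\}_{i\in R}\ge k+1$ under $N$-subgeneral position with respect to $V$; I would phrase this as: the corresponding vectors $[Q_i]$, $i\in R$, span a subspace of dimension $\ge k+1$, i.e.\ the hyperplanes $\widetilde H_i$ are in a suitable subgeneral position in $\C^M$.

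Next I would invoke Lemma \ref{3.2} with this family $\{\widetilde H_i\}_{i=1}^q$ of hyperplanes in $\C^M$ and with the integer $k+1\le M$: this yields a linear subspace $L\subset\C^M$ of dimension $k+1$ with $L\not\subset\widetilde H_i$ for all $i$ and with $\rank\{\widetilde H_{i_1}\cap L,\dots,\widetilde H_{i_l}\cap L\}=\rank\{\widetilde H_{i_1},\dots,\widetilde H_{i_l}\}$ for all $1\le l\le k+1$ and all choices of indices. Setting $H_i':=\widetilde H_i\cap L$, these are hyperplanes in $L\cong\C^{k+1}$, and the rank-preservation property guarantees they inherit $N$-subgeneral position: for any $R$ with $\sharp R=N+1$ one has $\rank\{H_i'\}_{i\in R}=\rank\{\widetilde H_i\}_{i\in R}=\rank\{Q_i\}_{i\in R}\ge k+1$, so $\bigcap_{i\in R}H_i'=\{0\}$ in $L$. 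Thus $\{H_i'\}_{i=1}^q$ is a family of $q>2N-k+1$ hyperplanes in $\C^{k+1}$ in $N$-subgeneral position, which is exactly the hypothesis of Lemma \ref{3.1} (with its ambient dimension $k+1$, i.e.\ ``$k$'' there equal to our $k$).

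I would then simply take the Nochka weights $\omega_i$ and constant $\tilde\omega$ produced by Lemma \ref{3.1} for the family $\{H_i'\}$, and claim these are the desired weights for $\{Q_i\}$. Properties (i), (ii), (iii) are identical to Lemma \ref{3.1}(i),(ii),(iii) verbatim. For (iv), Lemma \ref{3.1}(iv) gives $\sum_{i\in R}\omega_i\le\rank\{H_i'\}_{i\in R}$; but for $\sharp R=N+1$ the rank is exactly $k+1$ (it is $\ge k+1$ by subgeneral position and $\le\dim L=k+1$), so $\sum_{i\in R}\omega_i\le k+1$. For (v), Lemma \ref{3.1}(v) supplies $R^o\subset R$ with $\sharp R^o=\rank\{H_i'\}_{i\in R^o}=\rank\{H_i'\}_{i\in R}=k+1$ and $\prod_{i\in R}E_i^{\omega_i}\le\prod_{i\in R^o}E_i$; since $\rank\{H_i'\}_{i\in R^o}=\rank\{Q_i\}_{i\in R^o}$ by the construction, the translated statement $\sharp R^o=\rank\{Q_i\}_{i\in R^o}=k+1$ holds as required.

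The only genuine subtlety, and the step I expect to be the main obstacle, is verifying the rank dictionary cleanly in both directions: I must be careful that $\rank\{Q_i\}_{i\in R}$, defined here as the dimension of the span of the classes $[Q_i]$ in $I_d(V)$, really coincides with $\rank\{\widetilde H_i\}_{i\in R}$ as hyperplanes (these agree because in a finite-dimensional space the span of a set of vectors and the codimension of the intersection of their annihilator hyperplanes carry the same rank), and that the geometric lower bound $\rank\{Q_i\}_{i\in R}\ge\dim V-\dim(\bigcap_{i\in R}Q_i\cap V)$ genuinely yields $=k+1$ when $\sharp R=N+1$ (so that the intersection with $V$ is empty and $\dim=-1$). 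Once this identification is secured, Lemma \ref{3.2} and Lemma \ref{3.1} do all the remaining work, so the proof is short and the weight estimates follow without further computation.
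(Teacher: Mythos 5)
Your proposal is correct and follows essentially the same route as the paper: pass from the classes $[Q_i]$ to associated hyperplanes in $I_d(V)\cong\C^{H_V(d)}$, use Lemma \ref{3.2} to cut down to a $(k+1)$-dimensional subspace $L$ on which the family becomes $q$ hyperplanes in $N$-subgeneral position, and then import the Nochka weights from Lemma \ref{3.1}, with the rank identification $\rank\{Q_i\}_{i\in R^o}=\rank\{H_i\cap L\}_{i\in R^o}=k+1$ giving (iv) and (v) exactly as in the paper. The only point you share with the paper rather than resolve is the implicit assumption that each $[Q_i]$ is nonzero in $I_d(V)$, which the paper also takes for granted.
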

\begin{proof}
We assume that each $Q_i$ is given by
$$\sum_{I\in\mathcal I_d}a_{iI}x^I=0, $$
where $\mathcal I_d=\{(i_0,...,i_n)\in \mathbb N_0^{n+1}\ :\ i_0+\cdots + i_n=d\}$, $I=(i_0,...,i_n)\in\mathcal I_d$, $x^I=x_0^{i_0}\cdots x_n^{i_n}$ and $a_{iI}\in \mathbb C\ (1\le i\le q, I\in\mathcal I_d)$. Setting $Q^*_i(x)=\sum_{I\in\mathcal I_d}a_{iI}x^I.$ Then $Q^*_i\in H_d.$

Taking a $\mathbb C$-basis of $I_d(V)$, we may identify $I_d(V)$ with the $\mathbb C$-vector space $\mathbb C^M,$ where $M=H_V(d)$. For each $Q_i$, denote by $v_i$ the vector in $\mathbb C^M$ which corresponds to $[Q_i^*]$ by this identification. Denote by $H_i$ the hyperplane in $\mathbb C^{M}$ associated with  the vector $v_i$.

Then for each arbitrary subset $R\subset\{1,...,q\}$ with $\sharp R=N+1$, we have 
$$ \dim (\bigcap_{i\in R}Q_i\cap V)\ge  \dim V-\rank \{[Q_i]\}_{i\in R}=k-\rank \{H_i\}_{i\in R}.$$
Hence
$$\rank \{H_i\}_{i\in R}\ge k- \dim (\bigcap_{i\in R}Q_i\cap V)\ge k-(-1)=k+1. $$

By Lemma \ref{3.2}, there exists a linear subspace $L\subset\mathbb C^{M}$ of dimension $k+1$ such that $L\not\subset H_i\ (1\le i\le q)$ and 
$$ \rank \{H_{i_1}\cap L,\dots , H_{i_l}\cap L\}=  \rank \{H_{i_1},\dots ,  H_{i_l}\}$$
for every $1\le l\le k+1, 1\le i_1<\cdots <i_l\le q.$  Since $\rank \{H_i\}_{i\in R}\ge k+1,$ it implies that for any subset $R\in\{1,...,q\}$ with $\sharp R=N+1,$ there exists a subset $R'\subset R$ with $\sharp R'=k+1$ and $\rank \{H_i\}_{i\in R'}=k+1.$ Hence, we get
$$ \rank\{H_i\cap L\}_{i\in R}\ge \rank\{H_i\cap L\}_{i\in R'}= \rank\{H_i\}_{i\in R'}=k+1.$$
This yields that $\rank\{H_i\cap L\}_{i\in R}=k+1,$ since $\dim L=k+1$. Therefore, $\{H_i\cap L\}_{i=1}^q$ is a family of $q$ hyperplanes in $L$ in $N$-subgeneral position.

By Lemma \ref{3.1}, there exist Nochka weights $\{\omega_i\}_{i=1}^q$ for the family $\{H_i\cap L\}_{i=1}^q$ in $L.$ It is clear that assertions (i)-(iv) are automatically satisfied. Now for $R\subset\{1,...,q\}$ with $\sharp R=N+1$, by Lemma \ref{3.1}(v) we have 
$$ \sum_{i\in R}\omega_i\le\rank \{H_i\cap L\}_{i\in R}=k+1 $$
and there is a subset $R^o\subset R$ such that: 
\begin{align*}
&\sharp R^o=\rank \{H_i\cap L\}_{i\in R^0}=\rank \{H_i\cap L\}_{i\in R}=k+1,\\
& \prod_{i\in R}E_i^{\omega_i}\le\prod_{i\in R^o}E_i, \ \ \forall E_i\ge 1 \ (1\le i\le q),\\
&\rank \{Q_i\}_{i\in R^0}=\rank \{H_i\cap L\}_{i\in R^0}=k+1.
\end{align*}
Hence the assertion (v) is also satisfied.
The lemma is proved.
\end{proof}

\section{Second main theorems for hypersurfaces}

Let $\{Q_i\}_{i\in R}$ be a set of hypersurfaces in $\mathbb P^n(\mathbb C)$ of the common degree $d$. Assume that each $Q_i$ is defined by
$$ \sum_{I\in\mathcal I_d}a_{iI}x^I=0, $$
where $\mathcal I_d=\{(i_0,...,i_n)\in \mathbb N_0^{n+1}\ :\ i_0+\cdots + i_n=d\}$, $I=(i_0,...,i_n)\in\mathcal I_d,$ $x^I=x_0^{i_0}\cdots x_n^{i_n}$ and $(x_0:\cdots: x_n)$ is homogeneous coordinates of $\mathbb P^n(\mathbb C)$.

Let $f:\mathbb C^m\longrightarrow V\subset\mathbb P^n(\mathbb C)$ be an algebraically nondegenerate meromorphic mapping into $V$ with a reduced representation $f=(f_0:\cdots :f_n)$. We define
$$ Q_i(f)=\sum_{I\in\mathcal I_d}a_{iI}f^I ,$$
where $f^I=f_0^{i_0}\cdots f_n^{i_n}$ for $I=(i_0,...,i_n)$. Then we see that $f^*Q_i=\nu_{Q_i(f)}$ as divisors.

\begin{lemma}\label{4.1}
Let $\{Q_i\}_{i\in R}$ be a set of hypersurfaces in $\mathbb P^n(\mathbb C)$ of the common degree $d$ and let $f$ be a meromorphic mapping of $\mathbb C^m$ into $\mathbb P^n(\mathbb C)$. Assume that $\bigcap_{i\in R}Q_i\cap V=\varnothing$. Then there exist positive constants $\alpha$ and $\beta$ such that
$$\alpha ||f||^d \le  \max_{i\in R}|Q_i(f)|\le \beta ||f||^d.$$
\end{lemma}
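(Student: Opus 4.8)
The plan is to establish the two inequalities separately; the upper bound is immediate, while the lower bound rests on a compactness argument on $V$ together with the hypothesis $\bigcap_{i\in R}Q_i\cap V=\varnothing$.

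For the upper bound, write $Q_i(f)=\sum_{I\in\mathcal{I}_d}a_{iI}f^I$. Since $|f^I|=|f_0|^{i_0}\cdots|f_n|^{i_n}\le\|f\|^{i_0+\cdots+i_n}=\|f\|^d$ for every $I\in\mathcal{I}_d$ (using $|f_j|\le\|f\|$ and $\sum_j i_j=d$), the triangle inequality gives $|Q_i(f)|\le\bigl(\sum_{I\in\mathcal{I}_d}|a_{iI}|\bigr)\|f\|^d$. Thus $\max_{i\in R}|Q_i(f)|\le\beta\|f\|^d$ with $\beta:=\max_{i\in R}\sum_{I\in\mathcal{I}_d}|a_{iI}|$.

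For the lower bound, the key point is that both $x\mapsto\max_{i\in R}|Q_i(x)|$ and $x\mapsto\|x\|^d$ are homogeneous of degree $d$ on $\mathbb{C}^{n+1}\setminus\{0\}$, so their quotient
$$h([x]):=\frac{\max_{i\in R}|Q_i(x)|}{\|x\|^d}$$
is invariant under scaling and descends to a continuous function on $\mathbb{P}^n(\mathbb{C})$ (the denominator never vanishes away from the origin). Restricting to $V$, which is closed in the compact space $\mathbb{P}^n(\mathbb{C})$ and hence compact, $h$ attains its infimum. The hypothesis $\bigcap_{i\in R}Q_i\cap V=\varnothing$ says precisely that no point $[x]\in V$ satisfies $Q_i(x)=0$ for all $i\in R$; therefore $\max_{i\in R}|Q_i(x)|>0$ at every point of $V$, so $h|_V$ is strictly positive and $\alpha:=\min_{[x]\in V}h([x])>0$ by the extreme value theorem. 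Since $f(\mathbb{C}^m)\subset V$, for each $z$ in the domain we have $[f(z)]\in V$ and hence $\max_{i\in R}|Q_i(f(z))|=h([f(z)])\,\|f(z)\|^d\ge\alpha\|f(z)\|^d$, which is the desired lower bound.

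The step requiring the most care is the lower bound: one must verify that the quotient genuinely defines a continuous function on all of $\mathbb{P}^n(\mathbb{C})$, and then observe that the empty-intersection hypothesis is exactly what prevents the numerator from vanishing on the compact set $V$, forcing the minimum to be strictly positive. The remaining content is a routine application of the extreme value theorem.
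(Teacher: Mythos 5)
Your proof is correct and takes essentially the same approach as the paper's: both descend the degree-$0$ homogeneous quotient $h([x])=\max_{i\in R}|Q_i(x)|/\|x\|^d$ to a continuous function on the compact set $V$ and use the hypothesis $\bigcap_{i\in R}Q_i\cap V=\varnothing$ to conclude that its minimum $\alpha$ is strictly positive. Your explicit triangle-inequality choice of $\beta$ and your remarks on homogeneity are minor refinements of the same argument (and in fact fix a small slip in the paper, which writes the extremum over $\mathbb{P}^n(\mathbb{C})$ rather than over $V$).
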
 
\begin{proof}  
Let $(x_0:\cdots: x_n)$ be homogeneous coordinates of $\mathbb P^n(\mathbb C)$. Assume that each $Q_i$ is defined by $\sum_{I\in\mathcal I_d}a_{iI}x^I=0.$ 

Set $Q_i(x)=\sum_{I\in\mathcal I_d}a_{iI}x^I$ and consider the following function
$$ h(x)=\dfrac{\max_{i\in R}|Q_i(x)|}{||x||^d}, $$
where $||x||=(\sum_{i=0}^n|x_i|^2)^{\frac{1}{2}}$.

Since the function $h$ is positive continuous on $V,$ by the compactness of  $V$, there exist positive constants $\alpha$ and $\beta$ such that $\alpha =\min_{x\in \mathbb P^n(\mathbb C)}h(x)$ and $\beta =\max_{x\in \mathbb P^n(\mathbb C)}h(x)$. Thus
$$\alpha ||f||^d \le  \max_{i\in R}|Q_i(f)|\le \beta ||f||^d.$$
The lemma is proved. 
\end{proof}

The following lemma is due to Lemma 4.2 in \cite{AQT} with a slightly modification.
\begin{lemma}[{cf. \cite[Lemma 4.2]{AQT}}]\label{4.2}
Let $\{Q_i\}_{i=1}^q$ be a set of $q$ hypersurfaces in $\mathbb P^n(\mathbb C)$ of the common degree $d$. Then there exist $(H_V(d)-k-1)$ hypersurfaces $\{T_i\}_{i=1}^{H_V(d)-k-1}$ in $\mathbb P^n(\mathbb C)$ such that for any subset $R\in\{1,...,q\}$ with $\sharp R=\rank \{Q_i\}_{i\in R}=k+1,$ we get $\rank \{\{Q_i\}_{i\in R}\cup\{T_i\}_{i=1}^{M-k}\}=H_V(d).$
\end{lemma}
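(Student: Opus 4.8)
The plan is to recast the statement as a pure linear-algebra problem inside $I_d(V)\cong\C^M$ with $M=H_V(d)$, and then construct the hypersurfaces $T_i$ one at a time. After fixing a $\C$-basis of $I_d(V)$, each degree-$d$ hypersurface $Q$ yields a vector $v_Q\in\C^M$ representing its class $[Q]$, and by definition the rank of a family of hypersurfaces is the rank of the associated vectors. The key preliminary observation is that the quotient map $H_d\to I_d(V)$ is surjective, so \emph{every} vector of $\C^M$ is realized as $[T]$ for some hypersurface $T$ of degree $d$; hence it suffices to produce suitable vectors $t_1,\dots,t_{M-k-1}\in\C^M$ and then read off the corresponding $T_j$.

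For each admissible index set $R$, that is, $R\subset\{1,\dots,q\}$ with $\sharp R=\rank\{Q_i\}_{i\in R}=k+1$, write $W_R:=\mathrm{span}\{v_{Q_i}:i\in R\}$, a subspace of $\C^M$ of dimension $k+1$. There are only finitely many such $R$, at most $\binom{q}{k+1}$ of them. The goal then becomes: find $t_1,\dots,t_{M-k-1}\in\C^M$ so that for every admissible $R$ one has
$$\rank\bigl(\{v_{Q_i}:i\in R\}\cup\{t_1,\dots,t_{M-k-1}\}\bigr)=M,$$
equivalently $W_R\oplus\mathrm{span}\{t_1,\dots,t_{M-k-1}\}=\C^M$. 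Note that the combined family has exactly $(k+1)+(M-k-1)=M$ vectors, so this is the maximal possible rank.

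I would choose the $t_j$ inductively, maintaining the invariant that for every admissible $R$ the family $\{v_{Q_i}:i\in R\}\cup\{t_1,\dots,t_l\}$ has rank exactly $k+1+l$. The base case $l=0$ is precisely the admissibility of $R$. For the inductive step with $l<M-k-1$, set $U_R:=\mathrm{span}\bigl(W_R\cup\{t_1,\dots,t_l\}\bigr)$; by the invariant each $U_R$ is a proper subspace of $\C^M$, since its dimension $k+1+l$ is strictly less than $M$. Because a vector space over the infinite field $\C$ is never a finite union of proper subspaces, I may pick $t_{l+1}\in\C^M\setminus\bigcup_R U_R$; then $\rank(\{v_{Q_i}:i\in R\}\cup\{t_1,\dots,t_{l+1}\})=k+l+2$ for every admissible $R$, so the invariant is restored. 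After $M-k-1$ steps the invariant yields rank $M$ for each $R$, and realizing each $t_j$ as $[T_j]$ furnishes the required hypersurfaces.

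The only delicate point is that the single vector $t_{l+1}$ must simultaneously escape all of the finitely many proper subspaces $U_R$; this is exactly where the fact that $\C$ is infinite (so $\C^M$ is not a finite union of proper subspaces) does the work, and it is what makes a \emph{uniform} choice possible rather than one depending on $R$. Everything else is routine bookkeeping of ranks together with the surjectivity of $H_d\to I_d(V)$. Alternatively one may phrase the selection as a genericity statement: for fixed $R$ the spanning condition is the nonvanishing of an $M\times M$ determinant, hence a nonempty Zariski-open condition on $(t_1,\dots,t_{M-k-1})\in(\C^M)^{M-k-1}$, and a finite intersection of nonempty Zariski-open subsets of the irreducible affine space $(\C^M)^{M-k-1}$ is nonempty.
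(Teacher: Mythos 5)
Your proof is correct and, in essence, the same as the paper's: the paper defines, for each admissible $R$, the set $V_R$ of bad tuples $(v_1,\dots,v_{H_V(d)-k-1})\in (I_d(V))^{H_V(d)-k-1}$, shows each $V_R$ is a proper algebraic subset, and picks a tuple outside the finite union $\bigcup_R V_R$ --- which is precisely the ``alternative genericity phrasing'' you give at the end. Your primary presentation (choosing the $t_j$ one at a time, each avoiding a finite union of proper subspaces of $\C^M$) is just a slightly more hands-on version of the same idea, and your explicit remarks on the surjectivity of $H_d\to I_d(V)$ and the uniformity of the choice across all $R$ are fine.
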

\begin{proof} For each$R\subset\{1,...,q\}$ with $\sharp R=\rank \{Q_i\}_{i\in R}=k+1$, denote by $V_R$ the set of all vectors $v=(v_1,...,v_{H_V(d)-k-1})\in (I_d(V))^{H_V(d)-k-1}$ such that $\{ \{[Q_i]\}_{i\in R}, v_1,...,v_{H_V(d)-k-1}\}$ is linearly dependent over $\mathbb C$. Then $V_R$ is an algebraic subset of $(I_d(V))^{H_V(d)-k-1}$. Since $\dim I_d(V)=H_V(d)$ and  $\rank \{Q_i\}_{i\in R}=k+1$, there exists an element 
$$v=(v_1,...,v_{H_V(d)-k-1})\in (I_d(V))^{H_V(d)-k-1}$$
such that the family of vectors $\{ \{[Q_i]\}_{i\in R}, v_1,...,v_{H_V(d)-k-1}\}$ is linearly independent over $\mathbb C$, i.e., $v\not\in V_R$. Therefore $V_R$ is a proper algebraic subset of $(I_d(V))^{H_V(d)-k-1}$ for each $R.$ This implies that
$$ (I_d(V))^{H_V(d)-k-1}\setminus\bigcup_{R} V_R\ne\varnothing .$$
Hence, there is $(T^+_1,...,T^+_{H_V(d)-k-1})\in (I_d(V))^{H_V(d)-k-1}\setminus\bigcup_{R} V_R$.

For each $T^+_i$, take a representation $T_i\in H_d$ of $T^+_i$. Then
\begin{align*}
\rank \{\{Q_i\}_{i\in R}\cup\{T_i\}_{i=1}^{H_V(d)-k-1}\}=\rank\{\{[Q_i]\}_{i\in R}\cup\{[T_i]\}_{i=1}^{H_V(d)-k-1}\}=H_V(d)
\end{align*}
for every subset $R\in\{1,...,q\}$ with $\sharp R=\rank \{Q_i\}_{i\in R}=k+1$.

The lemma is proved.
\end{proof}

\vskip0.2cm
\noindent
{\bf Proof of Theorem \ref{1.1}.}\\
We first prove the theorem in the case where all $Q_i\ (i=1,...,q)$ do have the same degree $d$.
It is easy to see that there is a positive constant $\beta$ such that $\beta ||f||^d\ge |Q_i(f)|$ for every $1\le i\le q.$
Set $ Q:=\{1,\cdots ,q\}$. Let $\{\omega_i\}_{i=1}^q$ be as in Lemma \ref{3.3} for the family $\{Q_i\}_{i=1}^q$.  Let $\{T_i\}_{i=1}^{M-k}$ be $(M-k)$ hypersurfaces in $\mathbb P^n(\mathbb C)$, which satisfy Lemma \ref{4.2}. 

Take a $\mathbb C$-basis $\{[A_i]\}_{i=1}^{H_V(d)}$ of $I_d(V)$, where $A_i\in H_d$. Since $f$ is nondegenerate over $I_d(V)$, it implies that $\{A_i(f); 1\le i\le H_V(d)\}$ is linearly independent over $\mathbb C$. Then there is an admissible set $\{\alpha_1,\cdots ,\alpha_{H_V(d)}\}\subset  \mathbb Z_+^m$ such that 
$$W\equiv\det\bigl (\mathcal D^{\alpha_j}A_i(f)(1\le i\le H_V(d))\bigl )_{1\le j\le H_V(d)}\not\equiv 0$$
and  $|\alpha_j|\le H_V(d)-1$  for all $1\le j\le H_V(d).$

For each $R^o=\{r^0_1,...,r^0_{k+1}\}\subset\{1,...,q\}$ with $\rank \{Q_i\}_{i\in R^o}=\sharp R^o=k+1$, set $$W_{R^o}\equiv\det\bigl (\mathcal D^{\alpha_j}Q_{r^0_v}(f) (1\le v\le k+1),\mathcal D^{\alpha_j}T_l(f) (1\le l\le H_V(d)-k-1)\bigl )_{1\le j\le H_V(d)}.$$
Since $\rank \{Q_{r^0_v} (1\le v\le k+1),T_l (1\le l\le H_V(d)-k-1)\}=H_V(d)$, there exists a nonzero constant  $C_{R^o}$ such that $W_{R^o}=C_{R^o}\cdot W$. 

We denote by $\mathcal R^o$ the family of all subsets $R^o$ of $\{1,...,q\}$ satisfying $$\rank \{Q_i\}_{i\in R^o}=\sharp R^o=k+1.$$

Let $z$ be a fixed point. For each $R\subset Q$ with  $\sharp R=N+1,$  we choose $R^{o}\subset R$ such that $R^o\in\mathcal R^o$ and $R^o$ satisfies Lemma \ref{3.3} v) with respect to numbers $\bigl \{\dfrac{\beta ||f(z)||^d}{|Q_i(f)(z)|}\bigl \}_{i=1}^q$.  On the other hand, there exists $\bar R\subset Q$ with $\sharp \bar R=N+1$ such that $|Q_{i}(f)(z)|\le |Q_j(f)(z)|,\forall i\in \bar R,j\not\in \bar R$. Since $\bigcap_{i\in \bar R}Q_i=\varnothing$, by Lemma \ref{4.1}, there exists a positive constant $\alpha_{\bar R}$ such that
$$ \alpha_{\bar R} ||f||^d(z)\le \max_{i\in \bar R}|Q_i(f)(z)|. $$
Then, we get
\begin{align*}
\dfrac{||f(z)||^{d(\sum_{i=1}^q\omega_i)}|W(z)|}{|Q_1^{\omega_1}(f)(z)\cdots Q_q^{\omega_q}(f)(z)|}
&\le\dfrac{|W(z)|}{\alpha^{q-N-1}_{\bar R}\beta^{N+1}}\prod_{i\in \bar R}\left (\dfrac{\beta||f(z)||^d}{|Q_i(f)(z)|}\right )^{\omega_i}\\
&\le A_{\bar R}\dfrac{|W(z)|\cdot ||f||^{d(k+1)}(z)}{\prod_{i\in \bar R^o}|Q_i(f)|(z)}\\
&\le B_{\bar R}\dfrac{|W_{\bar R^o}(z)|\cdot ||f||^{dH_V(d)}(z)}{\prod_{i\in \bar R^o}|Q_i(f)|(z)\prod_{i=1}^{H_V(d)-k-1}|T_i(f)|(z)},
\end{align*}
where $A_{\bar R}, B_{\bar R}$ are positive constants. 

Put $S_{\bar R}=B_{\bar R}\dfrac{|W_{\bar R^o}|}{\prod_{i\in \bar R^o}|Q_i(f)|\prod_{i=1}^{H_V(d)-k-1}|T_i(f)|}$. By the Lemma on logarithmic derivative, it is easy to see that 
$$||\ \int_{S(r)}\log^{+}S_{\bar R}(z)\sigma_m=o(T_f(r)).$$

Therefore, for each  $z\in \mathbb C^m$, we have
\begin{align*}
\log \left (\dfrac{||f(z)||^{d(\sum_{i=1}^q\omega_i)}|W(z)|}{|Q_1^{\omega_1}(f)(z)\cdots Q_q^{\omega_q}(f)(z)|}\right )\le \log \left (||f||^{dH_V(d)}(z)\right )+\sum_{R\subset Q,\sharp R=N+1}\log^+S_R.
\end{align*}
Since $\sum_{i=1}^q\omega_i=\tilde\omega_i(q-2N+k-1)+k+1$ and by integrating both sides of the above inequality over $S(r),$  we have
\begin{align}\label{4.5}
||\   d(q-2N+k-1-\dfrac{H_V(d)-k-1}{\tilde\omega})T_f(r)\le\sum_{i=1}^{q}\dfrac{\omega_i}{\tilde\omega}N_{Q_i(f)}(r)-\dfrac{1}{\tilde\omega}N_{W}(r)+o(T_f(r)).
\end{align}

\textbf{Claim.} $\sum_{i=1}^q\omega_iN_{Q_i(f)}(r)-N_{W}(r)\le \sum_{i=1}^q\omega_iN^{[H_V(d)-1]}_{Q_i(f)}(r)$.

Indeed, let $z$ be a zero of some $Q_i(f)(z)$ and $z\not\in I(f)=\{f_0=\cdots =f_n=0\}$. Since $\{Q_i\}_{i=1}^q$ is in $N$-subgeneral position, $z$ is not zero of more than $N$ functions $Q_i(f)$. Without loss of generality, we may assume that $z$ is zero of $Q_i(f)$ for each $1\le i\le k\le N)$ and $z$ is not zero of $Q_i(f)$ for each $i>N$. Put $R=\{1,...,N+1\}.$ Choose $R^1\subset R$ such that 
$\sharp R^1=\rank\{Q_i\}_{i\in R^1}=k+1$ and $R^1$ satisfies Lemma \ref{3.3} v) with respect to numbers $\bigl \{e^{\max\{\nu_{Q_i(f)}(z)-H_V(d)+1,0\}} \bigl \}_{i=1}^q.$ Then we have
\begin{align*}
 \sum_{i\in R}\omega_i \max\{\nu_{Q_i(f)}(z)-H_V(d)+1,0\} \le \sum_{i\in R^1}\max\{\nu_{Q_i(f)}(z)-H_V(d)+1,0\}.
\end{align*}
This yields that
\begin{align*}
\nu_{W}(z)= \nu_{W_{R^1}}(z)&\ge \sum_{i\in R^1}\max\{\nu_{Q_i(f)}(z)-H_V(d)+1,0\}\\
&\ge\sum_{i\in R}\omega_i \max\{\nu_{Q_i(f)}(z)-H_V(d)+1,0\}. 
\end{align*}
Hence 
\begin{align*}
\sum_{i=1}^q\omega_i\nu_{Q_i(f)}(z)-\nu_{W}(z)& =\sum_{i\in R}\omega_i\nu_{Q_i(f)}(z)-\nu_{W}(z)\\ 
&= \sum_{i\in R}\omega_i\min\{\nu_{Q_i(f)}(z),H_V(d)-1\}\\
&+\sum_{i\in R}\omega_i\max\{\nu_{Q_i(f)}(z)-H_V(d)+1,0\}-\nu_{W}(z)\\
&\le \sum_{i\in R}\omega_i\min\{\nu_{Q_i(f)}(z),H_V(d)+1\}\\
&=\sum_{i=1}^q\omega_i\min\{\nu_{Q_i(f)}(z),M\}.
\end{align*}
Integrating both sides of this inequality, we get
$$ \sum_{i=1}^q\omega_iN_{Q_i(f)}(r)-N_{W}(r)\le \sum_{i=1}^q\omega_iN^{[H_V(d)-1]}_{Q_i(f)}(r).$$
This proves the claim.

Combining the claim and (\ref{4.5}), we obtain
\begin{align*}
||\  & d(q-2N+k-1-\dfrac{H_V(d)-k-1}{\tilde\omega})T_f(r)\\
&\le\sum_{i=1}^{q}\dfrac{\omega_i}{\tilde\omega}N^{[H_V(d)-1]}_{Q_i(f)}(r)+o(T_f(r))\\
&\le \sum_{i=1}^{q}N^{[H_V(d)-1]}_{Q_i(f)}(r)+o(T_f(r)).
\end{align*}
Since $\tilde\omega \ge \dfrac{k+1}{2N-k+1}$, the above inequality  implies that
$$ \biggl|\biggl|\quad  d\left (q-\dfrac{(2N-k+1)H_V(d)}{k+1}\right )T_f(r)\le \sum_{i=1}^{q}N^{[H_V(d)-1]}_{Q_i(f)}(r)+o(T_f(r)).$$
Hence, the theorem is proved in the case where all $Q_i$ do have the same degree.

We now prove the theorem in the general case where $\deg Q_i=d_i$. Applying the above case for $f$ and the hypersurfaces 
$Q^{\frac{d}{d_i}}_i\ (i=1,...,q)$ of the common degree $d$, we have
\begin{align*}
 \biggl|\biggl|\quad  \left (q-\dfrac{(2N-k+1)H_V(d)}{k+1}\right )T_f(r)&\le \dfrac{1}{d}\sum_{i=1}^{q}N^{[H_V(d)-1]}_{Q^{d/d_i}_i(f)}(r)+o(T_f(r))\\
&\le \sum_{i=1}^{q}\dfrac{1}{d}\frac{d}{d_i}N^{[H_V(d)-1]}_{Q_i(f)}(r)+o(T_f(r))\\
&=\sum_{i=1}^{q}\dfrac{1}{d_i}N^{[H_V(d)-1]}_{Q_i(f)}(r)+o(T_f(r)).
\end{align*}
The theorem is proved.\hfill$\square$

\section{Unicity of meromorphic mappings sharing hypersurfaces}

\begin{lemma}\label{5.1}
Let $f$ and $g$ be nonconstant meromorphic mappings of $\mathbb C^m$ into a complex projective subvariety $V$ of 
$\mathbb P^n(\mathbb C )$, $\dim V=k\ (k\le n)$. Let $Q_i\ (i=1,...,q)$ be moving hypersurfaces in $\mathbb P^n(\mathbb C)$  in 
$N$-subgeneral position with respect to $V$, $\deg Q_i=d_i$, $N\ge n$. Put $d=lcm (d_1,...,d_q)$ and $M=\binom{n+d}{n}-1$. Assume that both $f$ and $g$ are nondegenerate over $I_d(V)$.  Then $ ||\ T_f(r)=O(T_g(r))\ \text{ and }\  ||\ T_g(r)=O(T_f(r))$ if $q>\frac{(2N-k+1)H_V(d)}{k+1}.$
\end{lemma}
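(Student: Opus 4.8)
The plan is to run the Second Main Theorem for $f$ against the First Main Theorem for $g$, transferring information between the two maps through the sharing of the $Q_i$. Since $f$ is nondegenerate over $I_d(V)$ and $q>\frac{(2N-k+1)H_V(d)}{k+1}$, the constant $c:=q-\frac{(2N-k+1)H_V(d)}{k+1}$ is strictly positive, and Theorem \ref{1.1} applied to $f$ gives
$$\Big\Vert\ c\,T_f(r)\le\sum_{i=1}^q\frac{1}{d_i}N^{[H_V(d)-1]}_{Q_i(f)}(r)+o(T_f(r)).$$
The first move is to truncate crudely to level one, via $N^{[H_V(d)-1]}_{Q_i(f)}(r)\le (H_V(d)-1)N^{[1]}_{Q_i(f)}(r)$, so that only the reduced zero sets of the $Q_i(f)$ enter the estimate.

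The essential input beyond Theorem \ref{1.1} is the sharing hypothesis $f=g$ on $\bigcup_{i=1}^q(\zero Q_i(f)\cup\zero Q_i(g))$: it forces $\zero Q_i(f)=\zero Q_i(g)$ as sets, and hence $N^{[1]}_{Q_i(f)}(r)=N^{[1]}_{Q_i(g)}(r)$ for each $i$. I would then pass to $g$ in the counting functions and bound them by the First Main Theorem, $N^{[1]}_{Q_i(g)}(r)\le N_{Q_i(g)}(r)\le d_iT_g(r)+O(1)$. Chaining the three steps yields
$$\Big\Vert\ c\,T_f(r)\le (H_V(d)-1)\sum_{i=1}^q\frac{1}{d_i}\,d_i\,T_g(r)+o(T_f(r))=(H_V(d)-1)\,q\,T_g(r)+o(T_f(r)).$$
Because $c>0$, for all large $r$ outside the exceptional set the coefficient of $T_f(r)$ on the left stays bounded below by $c/2$, so the $o(T_f(r))$ term is absorbed and one concludes $\Vert\,T_f(r)=O(T_g(r))$. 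Interchanging the roles of $f$ and $g$ — legitimate because the hypotheses are symmetric in the two maps — gives $\Vert\,T_g(r)=O(T_f(r))$, which completes the proof.

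There is no deep obstacle here: the whole argument is a concatenation of Theorem \ref{1.1}, the trivial truncation inequality, the sharing identity, and the First Main Theorem, and it introduces no idea beyond Theorem \ref{1.1} itself. The only points requiring care are, first, that the sharing hypothesis is genuinely used — without it the two characteristic functions need not be comparable at all — and second, the routine bookkeeping of the exceptional set and of the $o(T_f(r))$ term, which is harmless precisely because $f$ and $g$ are nonconstant, so $T_f(r)\to\infty$ and the absorption step is valid off a set of finite Lebesgue measure.
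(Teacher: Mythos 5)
Your proof is correct and follows essentially the same route as the paper: apply Theorem \ref{1.1} to $f$, truncate to level one via $N^{[H_V(d)-1]}_{Q_i(f)}(r)\le (H_V(d)-1)N^{[1]}_{Q_i(f)}(r)$, switch to $g$ using $\zero Q_i(f)=\zero Q_i(g)$, bound by the First Main Theorem, and symmetrize. You also correctly identify that the step $N^{[1]}_{Q_i(f)}(r)=N^{[1]}_{Q_i(g)}(r)$ requires the sharing hypothesis from Theorem \ref{1.2}(ii), which the lemma's statement omits but the paper's own proof silently uses as well.
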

\textbf{\textit{Proof.}}\ Using Theorem \ref{1.1} for $f$, we have
\begin{align*}
\biggl|\biggl|\quad & \left (q-\dfrac{(2N-k+1)H_V(d)}{k+1}\right )T_f(r)\\
\le &\sum_{i=1}^{q}\dfrac{1}{d_i}N^{[H_V(d)-1]}_{Q_i(f)}(r)+o(T_f(r))\\
\le &\sum_{i=1}^q\dfrac{H_V(d)-1}{d_i}\ N_{Q_i(f)}^{[1]}(r)+o(T_f(r))\\
= &\sum_{i=1}^q\dfrac{H_V(d)-1}{d_i}\ N_{Q_i(g)}^{[1]}(r)+o(T_f(r))\\
\le & q(H_V(d)-1)\ T_g(r)+o(T_f(r)).
\end{align*}
Hence \quad $|| \quad T_f(r)=O(T_g(r)).$ Similarly, we get \  \ $|| \ \ T_g(r)=O(T_f(r)).$

\vskip0.2cm
\noindent
\textbf{Proof of Theorem \ref{1.2}.}\\
Assume that $f=(f_0:\cdots :f_n)$ and $g=(g_0:\cdots :g_n)$ are reduced representations of $f$ and $g,$ respectively. Replacing $Q_i$ by $Q_i^{\frac{d}{d_i}}$ if necessary, without loss of generality, we may assume that $d_i=d$ for all $1\le i\le q.$

a) By Lemma \ref{5.1}, we have $ ||\ T_f(r)=O(T_g(r))\ \text{ and }\  ||\ T_g(r)=O(T_f(r)).$ Suppose that $f\ne g$. Then there exist two indices 
$s,t$ with $0\le s<t\le n$ such that $ H:=f_sg_t-f_tg_s\not\equiv 0. $
By the assumption (ii) of the theorem, we have $H=0$ on $\bigcup_{i=1}^q(\zero Q_i(f)\cup\zero Q_i(g))$. Therefore, we have
$$ \nu^0_H\ge \sum_{i=1}^q\min\{1,\nu^0_{Q_i(f)}\}$$
outside an analytic subset of codimension at least two. This follows that
\begin{align}\label{5.2}
N_H(r)\ge \sum_{i=1}^qN^{[1]}_{Q_i(f)}(r).
\end{align}

On the other hand, by the definition of the characteristic function and by the Jensen formula, we have
\begin{align*}
N_H(r)&=\int_{S(r)}\log |f_sg_t-f_tg_s|\sigma_m\\ 
& \le \int_{S(r)}\log ||f||\sigma_m +\int_{S(r)}\log ||g||\sigma_m \\
&=T_f(r)+T_g(r).
\end{align*}
Combining this and (\ref{5.2}), we obtain
$$ T_f(r)+T_g(r) \ge \sum_{i=1}^qN^{[1]}_{Q_i(f)}(r).$$
Similarly, we have
$$ T_f(r)+T_g(r) \ge \sum_{i=1}^qN^{[1]}_{Q_i(g)}(r).$$
Summing-up both sides of the above two inequalities, we have
\begin{align}\label{5.3}
 2(T_f(r)+T_g(r))& \ge \sum_{i=1}^qN^{[1]}_{Q_i(f)}(r)+\sum_{i=1}^qN^{[1]}_{Q_i(g)}(r).
\end{align}
From (\ref{5.3}) and applying Theorem \ref{1.1} for $f$ and $g$, we have
\begin{align*}
& 2(T_f(r)+T_g(r))\\
&\ge \sum_{i=1}^q\dfrac{1}{H_V(d)-1}N^{[H_V(d)-1]}_{Q_i(f)}(r)+\sum_{i=1}^q\dfrac{1}{H_V(d)-1}N^{[H_V(d)-1]}_{Q_i(g)}(r)\\
&\ge \dfrac{d}{H_V(d)-1}\left (q-\dfrac{(2N-k+1)H_V(d)}{k+1}\right )(T_f(r)+T_g(r))+o(T_f(r)+T_g(r)).
\end{align*}
Letting $r\longrightarrow +\infty$, we get 
$$2 \ge \frac{d}{H_V(d)-1}\left (q-\frac{(2N-k+1)H_V(d)}{k+1}\right ),$$
$$\text{i.e., } q\le \frac{2(H_V(d)-1)}{d}+\frac{(2N-k+1)H_V(d)}{k+1}.$$
This is a contradiction.
Hence $f=g$. The assertion a) is proved.

b) Again, by Lemma \ref{5.1}, we have $ ||\ T_f(r)=O(T_g(r))\ \text{ and }\  ||\ T_g(r)=O(T_f(r)).$ Suppose that the assertion b) of the theorem does not hold.

By changing indices if necessary, we may assume that
$$\underbrace{\dfrac{Q_{1}(f)}{Q_{1}(g)}\equiv \cdots\equiv \dfrac{Q_{k_1}(f)}{Q_{k_1}(g)}}_{\text { group } 1}\not\equiv
\underbrace{\dfrac{Q_{k_1+1}(f)}{Q_{k_1+1}(g)}\equiv \cdots\equiv\dfrac{Q_{k_2}(f)}{Q_{k_2}(g)}}_{\text { group } 2}$$
$$\not\equiv \underbrace{\dfrac{Q_{k_2+1}(f)}{Q_{k_2+1}(g)}\equiv \cdots\equiv\dfrac{Q_{k_3}(f)}{Q_{k_3}(g)}}_{\text { group } 3}\not\equiv \cdot\cdot\cdot\not\equiv \underbrace{\dfrac{Q_{k_{s-1}+1}(f)}{Q_{k_{s-1}+1}(g)}\equiv \cdots \equiv\dfrac{Q_{k_s}(f)}{Q_{k_s}(g)}}_{\text { group } s},$$
where $k_s=q.$ 

Since the assertion b) of the theorem does not hold, the number of elements of each group is at most $N$. For each $1\le i \le q,$ we set
\begin{equation*}
\sigma (i)=
\begin{cases}
i+N& \text{ if $i+N\leq q$},\\
i+N-q&\text{ if  $i+N> q$}
\end{cases}
\end{equation*}
and  
$$P_i=Q_i(f)Q_{\sigma (i)}(g)-Q_i(g)Q_{\sigma (i)}(f).$$
Then $\dfrac{Q_i(f)}{Q_i(g)}$ and $\dfrac{Q_{\sigma (i)}(f)}{Q_{\sigma (i)}(g)}$ belong to two distinct groups, and hence $P_i\not\equiv 0$ for every $1\le i\le q.$ It is easy to see that
\begin{align*}
 \nu_{P_i}(z)\ge&\min\{\nu_{Q_i(f)}(z),\nu_{Q_i(g)}(z)\}+\min\{\nu_{Q_{\sigma (i)}(f)}(z),\nu_{Q_{\sigma (i)}(g)}(z)\}\\
&+\sum_{\overset{j=1}{j\ne i,\sigma (i)}}^{q}\min\{\nu_{Q_j(f)}(z),1\}\\
\ge&\sum_{j=i,\sigma (i)}\biggl (\min\{\nu_{Q_j(f)}(z),H_V(d)-1\}+\min\{\nu_{Q_j(g)}(z),H_V(d)-1\}\\
& -(H_V(d)-1)\min\{\nu_{Q_j(f)}(z),1\}\biggl )+\sum_{\overset{j=1}{j\ne i,\sigma (i)}}^{q}\min\{\nu_{Q_j(f)}(z),1\}.
\end{align*}
for all $z$ in $\mathbb C^m$.

Integrating both sides of this inequality, we get
\begin{align}\label{5.4}
\begin{split}
||\ N_{P_i}(r) &\ge \sum_{j=i,\sigma (i)}\biggl (N^{[H_V(d)-1]}_{Q_j(f)}(r)+N^{[H_V(d)-1]}_{Q_j(g)}(r)-(H_V(d)-1)N^{[1]}_{Q_j(f)}(r)\biggl )\\
&+\sum_{\overset{j=1}{j\ne i,\sigma (i)}}^{q}N^{[1]}_{Q_j(f)}(r).
\end{split}
\end{align}
Repeating the same argument as in the proof of Theorem \ref{1.2}, by Jensen's formula and by the definition of the characteristic function, we have
\begin{align}\label{5.5} 
||\ N_{P_i}(r)&\le d(T_f(r)+T_g(r))
\end{align}
From (\ref{5.4}) and (\ref{5.5}), we get
\begin{align*}
||\ d(T_f(r)+T_g(r))&\ge \sum_{j=i,\sigma (i)}\biggl (N^{[H_V(d)-1]}_{Q_j(f)}(r)+N^{[H_V(d)-1]}_{Q_j(g)}(r)-(H_V(d)-1)N^{[1]}_{Q_j(f)}(r)\biggl )\\
&+\sum_{\overset{j=1}{j\ne i,\sigma (i)}}^{q}N^{[1]}_{Q_j(f)}(r).
\end{align*}
Summing-up both sides of this inequality over all $1\le i\le q$, we obtain
\begin{align*}
||\ dq(T_f(r)+T_g(r))&\ge 2\sum_{j=1}^{q}\biggl (N^{[H_V(d)-1]}_{Q_j(f)}(r)+N^{[H_V(d)-1]}_{Q_j(g)}(r)\biggl )+(q-2H_V(d))\sum_{j=}^{q}N^{[1]}_{Q_i(f)}\\
&\ge 2d\left (q-\dfrac{(2N-k+1)H_{V}(d)}{k+1}\right )\biggl (T_f(r)+T_g(r)\biggl )+o(T_f(r)).
\end{align*}
Letting $r\longrightarrow +\infty$, we get 
\begin{align*}
&dq \ge  2d\left (q-\dfrac{(2N-k+1)H_{V}(d)}{k+1}\right ),\\
\text{i.e., }&q\le \dfrac{2(2N-k+1)H_{V}(d)}{k+1}.
\end{align*}
This is a contradiction. 

Hence the assertion b) holds. The theorem is proved.\hfill$\square$

\end{document}